\documentclass[11pt]{amsart}
\usepackage{tikz}
\usepackage{times}
\usepackage{color}
\usepackage{amsmath}
\usepackage{amssymb}
\usepackage{amsthm}
\usepackage{enumerate}
\usepackage{amsbsy}
\usepackage{amsfonts}
\topmargin -0.25in \textheight 8.5in \flushbottom
\setlength{\textwidth}{6.in} % 8.5in - 1in margin on each side = 6.5in
\setlength{\oddsidemargin}{.25in} % gives 1+.25 in left margin, page 163
\setlength{\evensidemargin}{.25in}% gives 1+.25 in left margin, page 163
\newtheorem{theo}{Theorem}[section]
\newtheorem{defin}[theo]{Definition}
\newtheorem{prop}[theo]{Proposition}

\newtheorem{lemm}[theo]{Lemma}
\newtheorem{rem}[theo]{Remark}

\newcommand{\al}{\alpha}

\newcommand{\Ga}{\Gamma}

\newcommand{\om}{\omega}

\newcommand{\ep}{\epsilon }

\newcommand{\De}{\Delta}
\newcommand{\de}{\delta}

\newcommand{\pa}{\partial}

\newcommand{\R}{{\mathbb R}^n}
\newcommand{\hR}{{\mathbb R}^n_+}

\newcommand{\ri}{\rightarrow}
\newcommand{\Rn}{{\mathbb R}^{n-1}}

\newcommand{\na}{\nabla}

\newcommand{\bke}[1]{\left( #1 \right)}

\newcommand{\bket}[1]{\left\{ #1 \right\}}
\newcommand{\norm}[1]{\left\Vert #1 \right\Vert}
\newcommand{\abs}[1]{\left| #1 \right|}
\newcommand{\rabs}[1]{\left. #1 \right|}

\begin{document}
\baselineskip=18pt

\title[]{On Caccioppoli's inequalities of Stokes equations and Navier-Stokes equations near boundary}
\author{Tongkeun Chang and Kyungkeun Kang}
%\author{TongKeun Chang}
%\address{TongKeun chang: Department of
%Mathematics, Yonsei University, 50 Yonsei-ro, Seodaemun-gu, Seoul,
%South Korea 120-749 } \email{chang7357@yonsei.ac.kr }

\thanks{}

\begin{abstract}

We study Caccioppoli's inequalities of the non-stationary Stokes
equations and Navier-Stokes equations. Our analysis is local near
boundary and we prove that, in contrast to the interior case, the
Caccioppoli's  inequalities of the Stokes equations and the
Navier-Stokes equations, in general, fail near boundary.

\end{abstract}

\maketitle

\section{Introduction}
\setcounter{equation}{0}

We consider first non-stationary Stokes equations near flat boundary
\begin{equation}\label{Stokes-10}
w_t - \De w + \na \pi =0\qquad \mbox{div } w =0 \quad \mbox{ in }
\,Q^+_{(0,1),1}:=B^+_{0,1}\times (0, 1),
\end{equation}
where $B^+_{0,r}:=\{ x=(x', x_n )\in \R: |x|<r, x_n >0\}$ for $r>0$. Here
no-slip boundary condition is given on the flat boundary, i.e.
\begin{equation}\label{Stokes-20}
w=0 \quad \mbox{ on } \,\Sigma:=(B_{0,1}\cap\{x_n=0\})\times (0,1),
\end{equation}
where $B_{0,r}:=\{ x\in \R: |x|<r \}$ for $r>0$. We emphasize that boundary
conditions are prescribed only on flat boundary of $Q^+_{(0,1),1}$, not on the
rounded boundary, $\{ x\in \R: |x|=1, x_n> 0 \}\times (0,1)$. From
now on, we denote $Q^+_{(0,1),1}$ and $B^+_{0,1}$ by $Q^+_1$ and
$B^+_1$, respectively, unless any confusion is to be expected.

We can compare similar situation to the heat equation, i.e.
\begin{align*}%\label{Heat-30}
v_t - \De v =0\quad \mbox{ in } \,Q^+_1
\end{align*}
with homogeneous boundary condition
\begin{align*}%\label{Heat-40}
v=0 \quad \mbox{ on } \,\Sigma.
\end{align*}
It is then well-known that the following  priori estimate, so called
Caccioppoli's inequality, is available:
\begin{equation}\label{Heat-50}
\norm{\nabla v}_{L^2(Q^+_{\frac{1}{2}})}\le c\norm{v}_{L^2(Q^+_1)},
\end{equation}
where $Q^+_{\frac{1}{2}}:=B^+_{\frac{1}{2}}\times (\frac{3}{4}, 1)$
and $c$ is independent of $v$. For the Stokes equations
\eqref{Stokes-10}-\eqref{Stokes-20}, an energy estimate shows
\begin{equation}\label{Stokes-55}
\norm{\nabla w}_{L^2(Q^+_{\frac{1}{2}})}\le
c\bke{\norm{w}_{L^2(Q^+_1)}+\norm{\pi}_{L^2(Q^+_1)}},
\end{equation}
and, to the authors' knowledge, the Caccioppoli's inequality as in
\eqref{Heat-50} is not known for the Stokes equations
\eqref{Stokes-10}-\eqref{Stokes-20}, namely it is unknown whether or
not the following inequality is available;
\begin{equation}\label{Stokes-57}
\norm{\nabla w}_{L^2(Q^+_{\frac{1}{2}})}\le c\norm{w}_{L^2(Q^+_1)},
\end{equation}
where $c$ is independent of $w$.

We remark that it was shown in \cite{Kang05} that the maximum of
normal derivatives of tangential components for Stokes equations are
not controlled by the righthand side of \eqref{Stokes-57}. More
precisely, an example of the Stokes equations
\eqref{Stokes-10}-\eqref{Stokes-20} was constructed such that
$\displaystyle\sup_{Q^+_{1/2}}|D_{x_3} w|$ is not bounded, although
$\int_{Q^+_1}|w|^2$ is finite. Furthermore,  solutions of the Stokes
equations \eqref{Stokes-10}-\eqref{Stokes-20} may not be even H\"older
continuous, unless corresponding pressures are  integrable (see \cite[Remark
6]{Kang05} and compare to \cite{Seregin00}).

We can also consider the Navier-Stokes equations near flat boundary,
i.e.
\begin{align*}%\label{NSE-60}
u_t - \De u + \na p =-\mbox{div}(u\otimes u), \qquad \mbox{div } u
=0 \quad \mbox{ in } \,Q^+_1
\end{align*}
with no-slip boundary condition
\begin{align*}%\label{NSE-70}
u=0 \quad \mbox{ on } \,\Sigma.
\end{align*}
Again, we emphasize that homogeneous boundary conditions are
assigned only on flat boundary of $Q_1^+$. We can ask whether or not the
following Caccioppoli type's inequality of the Navier-Stokes
equations is satisfied:
\begin{equation}\label{NSE-80}
\norm{\nabla u}^2_{L^2(Q^+_{\frac{1}{2}})}\le
c\bke{\norm{u}^2_{L^2(Q^+_1)}+\norm{u}^3_{L^3(Q^+_1)}},
\end{equation}
where $c$ is independent of $u$. The main point of the above
inequality is that pressure does not appear in the righthand side.

Our main goal is to show that it is not, in general, true to obtain
near boundary the Caccioppoli's inequality \eqref{Stokes-57} of the
Stokes equations and Caccioppoli type's inequality \eqref{NSE-80} of
the Navier-Stokes equations. More precisely, we construct  sequences
of smooth solutions (for example $W^{2,1}_2 (Q^+_1)$-solutions) of the Stokes equations and the Navier-Stokes equations
such that righthand-sides of \eqref{Stokes-57} and \eqref{NSE-80}
are uniformly bounded but left-hand sides are not uniformly bounded, i.e.
$L^2-$norms of gradient of velocities, say $\norm{\nabla u_n}^2_{L^2(Q^+_{\frac{1}{2}})}$, tend to infinity
by passing them to the limit (see Theorem
\ref{maintheorem}).

For the interior case, it was independently shown in  \cite{CSYT08},
\cite{J} and \cite{Wolf15} by different method of proofs that the
Caccioppoli's inequality \eqref{Stokes-57} of the Stokes equations is
valid in the interior.

The Caccioppoli type's inequality \eqref{NSE-80} was also extended in the interior
to {\it suitable} weak solutions of some nonlinear fluid equations,
e.g. the Navier-Stokes equations \cite{Wolf15}, Magnetohydrodynamics
equations \cite{Choe-Yang15} and non-Newtonian fluid flow
\cite{Jin-Kang17}.

We remark that the Caccioppoli type's inequality of the stationary
case was obtained in the interior or near boundary (refer to e.g.
\cite{Giaquinta-Modica}, \cite{Kang04} and \cite{Sverak-Tsai98}).

Our main result reads as follows:
\begin{theo}\label{maintheorem}
The Caccioppoli's inequality \eqref{Stokes-57} of the Stokes equations
and Caccioppoli type's inequality \eqref{NSE-80} of the
Navier-Stokes equations, in general, fail  near boundary.
\end{theo}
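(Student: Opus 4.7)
The plan is to disprove \eqref{Stokes-57} and \eqref{NSE-80} by constructing explicit sequences of smooth solutions $(w_k,\pi_k)$ of the Stokes system and $(u_k,p_k)$ of the Navier--Stokes system on $Q^+_1$, vanishing on $\Sigma$, along which $\norm{w_k}_{L^2(Q^+_1)}$ (resp.\ $\norm{u_k}^2_{L^2(Q^+_1)} + \norm{u_k}^3_{L^3(Q^+_1)}$) stays uniformly bounded while $\norm{\nabla w_k}_{L^2(Q^+_{1/2})}$ (resp.\ $\norm{\nabla u_k}_{L^2(Q^+_{1/2})}$) diverges. The guiding principle is that, unlike in the interior, the Stokes pressure near $\Sigma$ is not controlled by the velocity in $L^2$: \cite{Kang05} already exhibits a single solution with $w\in L^2(Q^+_1)$ whose pressure is only distributional on $\Sigma$ and whose derivative $Dw$ is unbounded on $Q^+_{1/2}$. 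The point is to upgrade this pathology into a full $L^2$-gradient blow-up along a sequence.

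For the Stokes part, I would look for solutions generated by a harmonic pressure $\pi_k(x,t)=\eta_k(t)\,H_k(x)$, where $\eta_k$ is a smooth temporal cutoff and $H_k$ is harmonic in $B^+_1$ with an increasingly singular Dirichlet trace on $\Sigma$ (for instance an approximation of a $\de$-measure at some $x_0\in\Sigma$, or an even reflection of a suitable Riesz-type kernel). The velocity $w_k$ is then defined as the Stokes solution with source $-\na\pi_k$, no-slip data on $\Sigma$, trivial initial data and suitable decay at infinity; using the Solonnikov--Ukai half-space Green tensor, $w_k$ is a Duhamel convolution of $\na\pi_k$ against the parabolic Stokes kernel. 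Tangential heat smoothing keeps $\norm{w_k}_{L^2(Q^+_1)}$ uniformly bounded in $k$, while the normal derivative of $w_k$ inherits the boundary singularity of $\na\pi_k$ and forces $\norm{\nabla w_k}_{L^2(Q^+_{1/2})}\to\infty$. A space-time cutoff then localizes the construction to $Q^+_1$, the Stokes equations being preserved modulo a lower-order error supported away from $Q^+_{1/2}$.

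For the Navier--Stokes statement two routes are available. If the Stokes sequence can be arranged as a tangential shear flow $w_k=(\psi_k(x_n,t),0,\dots,0)$ with pressure $\pi_k=f_k(t)x_1$, then $(w_k\cdot\na)w_k\equiv 0$ and the same sequence solves Navier--Stokes exactly with the same pressure, which is the cleanest option. Otherwise I would set $u_k:=\la_k w_k$ with $\la_k\to 0$ tuned so that $\la_k^3\norm{w_k}^3_{L^3(Q^+_1)}\to 0$ while $\la_k\norm{\nabla w_k}_{L^2(Q^+_{1/2})}\to\infty$, and absorb the residual $-\la_k^2\mbox{div}(w_k\otimes w_k)$ by a Helmholtz redefinition of the pressure together with a small divergence-free velocity correction controlled in the energy norm.

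The main obstacle is quantitative: one must show that the boundary singularity of the harmonic pressure genuinely transfers to an $L^2$-divergent normal derivative of $w_k$, \emph{without} simultaneously inflating $\norm{w_k}_{L^2(Q^+_1)}$, and that the localizing cutoff (and, for Navier--Stokes, the amplitude $\la_k$) does not secretly regularize the example. This reduces to sharp asymptotics of the half-space Stokes Green tensor in the normal variable near $\Sigma$, together with a careful bookkeeping of the compatibility conditions at $t=0$ and on the curved portion of $\pa B^+_1$, where the non-local, pressure-driven blow-up ultimately lives.
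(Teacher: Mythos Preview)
Your primary Stokes construction has a structural gap. If you feed $-\nabla\pi_k$ as a \emph{source} into the Stokes system with zero initial data and no-slip on $\Sigma$, the gradient force is absorbed entirely by the pressure and the unique solution is $w_k\equiv 0$, $q_k=-\pi_k$; nothing is generated. If instead you try to \emph{prescribe} the pressure to be $\pi_k$ and solve $w_t-\Delta w=-\nabla\pi_k$ componentwise with Dirichlet data on $\Sigma$, the resulting $w$ is generically not divergence-free: ${\rm div}\,w$ satisfies the homogeneous heat equation with zero initial data but boundary value $\partial_n w_n|_{x_n=0}$, which has no reason to vanish. So a spatially singular harmonic pressure (your ``approximation of a $\delta$-measure at $x_0\in\Sigma$'') does not by itself produce a nontrivial Stokes flow satisfying the no-slip condition. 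Also, your cutoff step (``Stokes preserved modulo a lower-order error supported away from $Q^+_{1/2}$'') would destroy the hypothesis of \eqref{Stokes-57}, which is stated for exact solutions.

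The paper takes a different route: it prescribes nonzero \emph{velocity} boundary data, only the normal component $g_n$, supported in an annulus $1<|y'|<2$ away from the origin and carrying a temporal singularity $(s-t_0)^{-1/4}|\ln(s-t_0)|^{-1/4-\epsilon}$. The solution is then the explicit Poisson integral \eqref{rep-bvp-stokes-w}, and the $L^2$-blow-up of $D_{x_n}w_i$ ($i<n$) is extracted from the off-diagonal kernel $B_{in}=L_{in}-L_{ni}$; no cutoff is needed because $g$ already vanishes on $\Sigma\cap Q^+_1$. For Navier--Stokes the paper writes $u=w+v$ and builds $v$ by a small-data fixed point with $\nabla v\in L^2L^n$, which is your option~2 carried out precisely. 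Your shear-flow option~1 is in fact the most promising way to rescue your plan, and it would work for \emph{both} Stokes and Navier--Stokes simultaneously (since $(w\cdot\nabla)w\equiv 0$ and no localization is needed); but the singularity must be \emph{temporal}, not spatial. Writing $\psi=A(t)+\phi(x_n,t)$ with $A=\int_0^t a$ and $\phi$ the caloric extension of $-A$, the requirement is that $A\in L^2_t\setminus H^{1/4}_t$ locally, so that $\psi\in L^2$ while $\partial_n\psi=\partial_n\phi\notin L^2$. This is the quantitative step you flag as the ``main obstacle'' but do not carry out.
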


We remark that our analysis is only local near boundary. Our
construction of sequences of solutions failing \eqref{Stokes-57} or
\eqref{NSE-80} are caused by non-local behavior of solutions and
singular boundary conditions away from flat boundary. Therefore,
such construction would not be applicable to the Stokes and the
Navier-Stokes equations in domains with no-slip conditions
on boundaries everywhere.

We briefly give the mainstream of
how we show Theorem \ref{maintheorem}.

Firstly, we construct, in a half-space, a very weak solution of the Stokes equations
whose normal derivatives of tangential components is not locally square integrable near boundary.
More precisely, we use the explicit formula \eqref{rep-bvp-stokes-w} of the Stokes equations with a
boundary condition given in \eqref{boundarydata} and \eqref{0502-6}.
In particular, among all split terms of the normal derivative of tangential component, it turns out that the following integral (see \eqref{0411-v}) is not square integrable in a local neighborhood near boundary (see Proposition \ref{lemma0406}).
\[
D_{x_n} \int_0^t \int_{\Rn} B_{in}(x'- y', x_n, t -s) g_n(y', s) dy'ds,\qquad i=1,2, \cdots n-1,
\]
where $B_{in}$ is given in \eqref{B-tensor}.
Indeed, one crucial estimate
is
\[
\int_{\Rn} e^{-\frac{|X'-z'|^2}{t-s}} \frac{z_1}{|z'|^n} dz' \gtrsim (t-s)^{\frac{n-1}2}\bke{1-e^{-\frac{c}{t-s}}}, \quad |X'| \geq \frac12,
\]
which causes the unbounded $L^2$-norm of normal derivative in $B_r(0)\times (t_0, t_0+{r^2})$
(see \eqref{est-J1J2J3}, \eqref{grad-w1} and \eqref{L2-blowup}). On the other hand, we can see that
the solution is integrable in $L^4 (0, \infty; L^p(\R_+))$ for $n/(n-1)<p<\infty$ (see Proposition \ref{lemma0406}).

Next step is to regularize the boundary data that is originally singular so that corresponding solutions of the Stokes equations
are regular. If the Caccioppoli's inequality near boundary is valid, then the regular solutions
should satisfy the inequality. If this is the case, by passing to the limit, the gradient of the limit solution must
be square integrable near boundary, which leads to a contradiction (see for details Subsection 4.1 in Section 4).

For the Navier-Stokes equations, we consider the similar situation as in the Stokes equations.
If we denote by $w$ the solution of the Stokes equations mentioned above, we look for a solution $u$
of the form $u=w+v$ such that $v$ solving the following perturbed Navier-Stokes equations:
\[
v_t-\Delta v+\nabla\pi=-{\rm div}\,( (v+w)\otimes (v+w)), \qquad {\rm div}\, v=0
\]
with zero boundary and zero initial data.
In fact, we construct a weak solution $v$ whose gradient is square integrable, in case that the size of
$w$ is assumed to be a sufficiently small.
Therefore, similar argument as in Stokes equations yields that the Caccioppoli's inequality near boundary
fails in general for the Navier-Stokes equations as well (see for details Subsection 4.2 in Section 4).

This paper is organized as follows. In Section 2, we recall some
known results and introduce new estimates that are useful for our purpose. Section 3 is devoted
to constructing a very weak solution in a half space such that
$L^2-$norm of its gradient is not bounded. In Section 4, we present
the proof of Theorem \ref{maintheorem}.

\section{Preliminaries}
\setcounter{equation}{0}

For notations,  we denote $x=(x^{\prime},x_n)$, where the symbol
${\prime}$ means the coordinate up to $n-1$, that is,
$x^{\prime}=(x_1,x_2,\cdot\cdot\cdot,x_{n-1})$. We write $D_{x_i} u$
  as the partial derivative of $u$ with respect to $x_i, \,\, 1 \leq i \leq n$, i.e., $D_{x_i} u(x) =
\frac{\pa }{\pa x_i}u(x)$. Throughout this paper we denote by $c$ various generic positive constant and by $c(*,\cdots,*)$ depending on the quantities appearing in the
parenthesis.

Next, we introduce notions of {\it very weak solutions} for the
Stokes equations and the Navier-Stokes equations with non-zero boundary
values in a half-space $\hR$. To be more precise, we consider first
the following Stokes equations in $\hR$ with non-zero boundary values:
\begin{equation}\label{Stokes-bvp-200}
w_t - \De w + \na \pi = {\rm div} \,F, \qquad \mbox{div } w =0,\quad
\mbox{ in }
 \hR \times (0,\infty)
\end{equation}
with
\begin{equation}\label{Stokes-bvp-210}
\rabs{w}_{t=0}= 0, \qquad  \rabs{w}_{x_n =0} = g.
\end{equation}
We now define very weak solutions of
\eqref{Stokes-bvp-200}-\eqref{Stokes-bvp-210}.
\begin{defin}\label{stokesdefinition}
Let $g \in L^1_{\rm{loc}} (\pa \hR\times (0, T))$ and $ F \in
L^1_{\rm{loc}}(\hR\times (0, T))$. A vector field $u\in
L^1_{\rm{loc}}( \hR \times (0, T))$ is called a very weak solution
of the Stokes equations \eqref{Stokes-bvp-200}-\eqref{Stokes-bvp-210},
if the following equality is  satisfied:
\begin{align*}%\label{Stokes-bvp-220}
-\int^T_0\int_{\hR }u\cdot \Delta \Phi
dxdt=\int^T_0\int_{\hR}\bke{u\cdot \Phi_t-F:\nabla \Phi} dxdt
-\int^T_0 \int_{\pa \hR} g \cdot D_{x_n}\Phi  dx' dt
\end{align*}
for each $\Phi\in C^2_c(\overline{\hR}\times [0,T])$ with
\begin{align}\label{0528-1}
\mbox{\rm div } \Phi=0,\quad \Phi\big|_{\pa \hR \times (0, T)}=0, \quad
\Phi(\cdot,T) =0.
\end{align}
In addition, for each $\Psi\in C^1_c(\overline{\hR})$ with
 $\Psi \big|_{\pa \hR }=0$
\begin{equation}\label{Stokes-bvp-2200}
\int_{\hR} u(x,t) \cdot \na \Psi(x) dx =0  \quad  \mbox{ for all}
\quad 0 < t< T.
\end{equation}
\end{defin}

We recall  existence results of the Stokes equations
\eqref{Stokes-bvp-200}-\eqref{Stokes-bvp-210}.
\begin{prop}\label{thm-stokes}(\cite[Theorem 1.2]{CJ})\,\,
Let  $1<p, q<\infty$. Assume that $g\in
L^q(0,\infty;\dot{B}^{-\frac{1}{p}}_{pp}(\Rn))$ and $F\in
L^{q_1}(0,\infty; L^{p_1}(\hR)) $, where $p_1, q_1$ satisfy
$n/p_1+2/q_1=n/p+2/q+1$ with $q_1\leq q$ and $p_1 \leq p$. Then, there is the unique
very weak solution $w\in L^q(0,\infty;L^p(\R_+))$ of the Stokes equations
\eqref{Stokes-bvp-200}-\eqref{Stokes-bvp-210} such that $w$
satisfies
\[
\| w\|_{L^q(0,\infty;L^p(\R_+))}\leq c \bke{\|F\|_{L^{q_1}(0,\infty; L^{p_1}(\R_+))} +\|
g\|_{L^q(0,\infty;\dot{B}^{-\frac{1}{p}}_{pp}(\Rn))}}.
\]
\end{prop}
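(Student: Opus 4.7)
The plan is to establish the proposition by splitting the data by linearity into the pair $(F,0)$ and $(0,g)$, handling each subproblem separately, and then summing the resulting estimates; uniqueness is then obtained by a duality argument.

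For the interior forcing problem with $g=0$, I would appeal to the classical maximal regularity theory of the non-stationary Stokes system in $\hR$ with no-slip boundary condition (Solonnikov, Giga--Sohr). Representing $w$ through the half-space Oseen tensor and integrating by parts transfers the divergence onto the kernel, leaving a singular integral of convolution type that is Calder\'on--Zygmund in the tangential variables and parabolic in the normal/time variables. A parabolic Sobolev embedding of Riesz-potential type then converts the one spatial derivative gained by the integration by parts into the scaling gap $n/p_1+2/q_1 = n/p + 2/q + 1$, yielding the bound $\|w\|_{L^q_t L^p_x} \leq c\,\|F\|_{L^{q_1}_t L^{p_1}_x}$. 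The conditions $q_1\leq q$ and $p_1 \leq p$ are precisely what the embedding requires.

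For the boundary data problem with $F=0$, I would use the explicit Poisson-type representation formula of the half-space Stokes system (which the authors exploit later as \eqref{rep-bvp-stokes-w}), whose kernel is obtained from the heat kernel together with a pressure correction via tangential Fourier transform. At each fixed time slice, the map $g(\cdot,t)\mapsto w(\cdot,t)$ is a Stokes--Poisson lifting from $\dot B^{-1/p}_{pp}(\Rn)$ into $L^p(\hR)$; its boundedness is essentially the converse of the trace theorem, since $\dot B^{-1/p}_{pp}(\Rn)$ is precisely characterized as the trace space of $L^p(\hR)$-functions extended into the half-space by a Stokes-type heat semigroup. A Mikhlin multiplier analysis on the tangential variables gives the slice bound, and Minkowski/Young in the time variable then upgrades it to $\|w\|_{L^q_t L^p_x} \leq c\,\|g\|_{L^q_t \dot B^{-1/p}_{pp}}$.

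Uniqueness follows directly from the very weak formulation: the difference of two solutions is paired against $\Phi\in C^2_c(\overline{\hR}\times[0,T])$ satisfying \eqref{0528-1} chosen as the solution of the backward adjoint Stokes problem with an arbitrary smooth compactly supported divergence-free right-hand side, forcing the difference to vanish as a distribution. The main technical obstacle is the boundary estimate, because $g$ has \emph{negative} tangential regularity $-1/p$, so one cannot differentiate under the integral and must rely on subtle cancellation between the velocity and pressure parts of the Stokes--Poisson kernel to avoid a logarithmic blow-up near $\pa\hR$; the scaling identity $n/p_1+2/q_1=n/p+2/q+1$ is what makes the two subproblem estimates dimensionally compatible and pins down the admissible $(p,q,p_1,q_1)$.
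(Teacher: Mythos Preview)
This proposition is not proved in the paper at all: it is simply quoted from \cite[Theorem 1.2]{CJ} as a known result and used as a black box. So there is no ``paper's own proof'' to compare your sketch against. Your outline (linearity splitting, maximal regularity plus parabolic embedding for the $F$-part, Poisson kernel lifting for the $g$-part, duality for uniqueness) is a reasonable strategy and is indeed close in spirit to how such results are typically established, but whether it matches the argument in \cite{CJ} cannot be read off from the present paper.
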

Here, $\dot{B}^{\al}_{pp}(\Rn)$ is homogeneous Besov space in $\Rn$ (see \cite{BL} for definition and properties of homogeneous Besov spaces).

The estimate of the following proposition may be known to the experts, but we couldn't find it in the literature.
Since it will be used to prove Theorem \ref{maintheorem}, we give its details in Appendix \ref{appendixa}.
\begin{prop}\label{theo0503}
Let $1< p, q< \infty$, $F \in L^q (0,\infty; L^p (\R_+))$, $\rabs{F}_{x_n =0} \in L^q (0, \infty; \dot B^{-\frac1p}_{pp} (\Rn))$ and  $g =0$. Then, there is the unique
very weak solution $w $ of the Stokes equations
\eqref{Stokes-bvp-200}-\eqref{Stokes-bvp-210} such that $w$
satisfies
\begin{align}\label{0504-1}
\|  \na w\|_{L^q (0, \infty;   L^p (\R_+)} \leq c \bke{\| F\|_{L^{q} (0,\infty;  L^p (\R_+))} +  \| \rabs{F}_{x_n =0}\|_{L^q (0, \infty; \dot B^{-\frac1p}_{pp}(\Rn))}}.
\end{align}
\end{prop}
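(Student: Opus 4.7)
The plan is to obtain \eqref{0504-1} from an explicit representation of the very weak solution via the Green tensor $\Ga_{ij}(x,y,t)$ of the non-stationary Stokes system in $\hR$ with homogeneous Dirichlet boundary condition. With $g=0$, the solution of Proposition \ref{thm-stokes} admits the representation $w_i(x,t)=\int_0^t\!\int_{\hR}\Ga_{ij}(x,y,t-s)(\mbox{div}\, F)_j(y,s)\,dyds$, and integrating by parts in $y$ yields
\begin{align*}
w_i(x,t)=-\int_0^t\!\!\int_{\hR}\pa_{y_k}\Ga_{ij}(x,y,t-s)F_{jk}(y,s)\,dyds-\int_0^t\!\!\int_{\Rn}\Ga_{ij}(x,y',0,t-s)F_{jn}(y',0,s)\,dy'ds,
\end{align*}
where the boundary integral is generated by the normal-direction trace of $F$. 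This boundary term is the structural reason that $\rabs{F}_{x_n=0}$, measured in $\dot B^{-1/p}_{pp}(\Rn)$, must appear on the right-hand side of \eqref{0504-1}. Differentiating in $x$ then splits $\na w$ into an interior piece with kernel $\pa_x\pa_y\Ga(x,y,t-s)$ and a boundary Poisson-type piece with kernel $\pa_x\Ga(x,y',0,t-s)$.

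For the interior piece, the task reduces to the operator bound
\begin{align*}
\norm{\int_0^t\!\!\int_{\hR}\pa_x\pa_y\Ga(\cdot,y,\cdot-s)\,F(y,s)\,dyds}_{L^q(0,\infty;L^p(\hR))}\leq c\,\norm{F}_{L^q(0,\infty;L^p(\hR))},
\end{align*}
which will follow from pointwise Solonnikov-type bounds on $\pa_x\pa_y\Ga$ combined with vector-valued parabolic Calder\'on--Zygmund theory in the half space: the kernel is of Calder\'on--Zygmund size with respect to the parabolic metric, after the usual reflected-variable adjustments $y^*=(y',-y_n)$ that encode the boundary behaviour.

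For the boundary piece, the required bound is
\begin{align*}
\norm{\int_0^t\!\!\int_{\Rn}\pa_x\Ga_{ij}(\cdot,y',0,\cdot-s)F_{jn}(y',0,s)\,dy'ds}_{L^q(0,\infty;L^p(\hR))}\leq c\,\norm{\rabs{F}_{x_n=0}}_{L^q(0,\infty;\dot B^{-1/p}_{pp}(\Rn))},
\end{align*}
which is precisely the gradient estimate for the half-space Stokes flow driven by the normal component of a boundary datum, of the same nature as the estimate underlying Proposition \ref{thm-stokes}. It will be obtained from the parabolic Littlewood--Paley characterization of $\dot B^{-1/p}_{pp}(\Rn)$ together with direct analysis of the kernel $\pa_x\Ga(x,y',0,t-s)$ on $\{x_n>0\}\times\{y_n=0\}$.

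Combining the two bounds gives \eqref{0504-1}. The main obstacle will be the interior Calder\'on--Zygmund estimate: the whole-space analogue is classical, but for the half-space Stokes system the kernel $\pa_x\pa_y\Ga$ is considerably more intricate than its heat-equation analogue, and the precise anisotropic behaviour as $x_n$ or $y_n$ approaches the boundary must be tracked carefully to fit the estimate into the parabolic Calder\'on--Zygmund framework.
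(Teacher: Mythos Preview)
Your outline is correct in spirit but follows a different and somewhat harder route than the paper. You propose to integrate by parts directly against the full Stokes Green tensor $\Ga_{ij}$ and then treat $\pa_x\pa_y\Ga_{ij}$ as a parabolic Calder\'on--Zygmund kernel in the half space. The paper instead first applies the Helmholtz projection $\mathbb{P}$ in $\hR$, writing $f={\rm div}\,F=\mathbb{P}f+\na\mathbb{Q}f$ and absorbing $\na\mathbb{Q}f$ into the pressure. The key gain is that $(\mathbb{P}f)_n|_{x_n=0}=0$, so Solonnikov's representation applies cleanly, and a result from \cite{CK} reduces $\|\na w\|_{L^q(L^p)}$ to controlling only the \emph{heat-kernel} convolutions $\na\Gamma*\mathbb{P}f$ and $\na\Gamma^**\mathbb{P}f$ (with $\Gamma^*$ the reflected heat kernel); the non-heat part of the Green tensor is never estimated directly. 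The boundary trace of $F$ then reappears not through your integration by parts, but via an explicit decomposition of $\mathbb{Q}({\rm div}\,F)$ (Lemma~\ref{lemma0523-3}) that produces single-layer potentials of $F_{kn}|_{x_n=0}$, handled by the Poisson-type estimate of Proposition~\ref{0503prop2} together with standard Calder\'on--Zygmund and trace bounds. Your approach can be carried out---it is essentially the maximal $L^q(L^p)$ regularity for the half-space Stokes system---but the step you yourself flag as the main obstacle, the CZ bound for the full $\pa_x\pa_y\Ga_{ij}$, is precisely what the paper's Helmholtz reduction sidesteps.
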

In \cite{GGS}, the authors showed \eqref{0504-1} with additional condition  ${\rm div}\, F =0$ and $F_{in}|_{x_n =0}=0, \, i =1, \cdots , n$. In \cite{KS}, the authors    showed \eqref{0504-1} with condition $p =q$, then the second term of the right-hand side is dropped.

We also consider the boundary value problem of the
Navier-Stokes equations in a half-space, namely
\begin{equation}\label{NSE-bvp-230}
u_t - \De u + \na p =-\mbox{div}(u\otimes u), \qquad \mbox{div } u
=0 \qquad\mbox{ in } \hR \times (0,\infty)
\end{equation}
with
\begin{equation}\label{NSE-bvp-240}
\rabs{u}_{t=0}= 0, \qquad  \rabs{u}_{x_n =0} = g.
\end{equation}
We mean by very weak solutions of the Navier-Stokes equations
\eqref{NSE-bvp-230}-\eqref{NSE-bvp-240} as follows:

\begin{defin}
Let $g \in L^1_{\rm{loc}}(\pa \hR\times (0, T))$. A vector field
$u\in L^2_{loc}( \hR \times (0, T))$ is called a very weak solution
of the non-stationary Navier-Stokes equations
\eqref{NSE-bvp-230}-\eqref{NSE-bvp-240}, if the following equality
is  satisfied:
\begin{align*}%\label{NSE-bvp-250}
-\int^T_0\int_{\hR}u\cdot \Delta \Phi dxdt=\int^T_0\int_{\hR}(u\cdot
\Phi_t+(u\otimes u):\nabla \Phi) dxdt -\int^T_0 \int_{\pa \hR} g
\cdot D_{x_n} \Phi dx' dt
\end{align*}
for each $\Phi\in C^2_c(\overline{\hR}\times [0,T])$ satisfying \eqref{0528-1}.
In addition, for each $\Psi\in C^1_c(\overline{\hR})$ with
 $\Psi \big|_{\pa \hR }=0$,  $u$ satisfies \eqref{Stokes-bvp-2200}.
\end{defin}

%%%%%%%%%%%%%%%%%%%%%%%%%%%%%%%%%%%%%%%%%%%%%%%%%%%%%%%%%%%%%%
%%%%%%%%%%%%%%%%%%%%%%%%%%%%%%%%%%%%%%%%%%%%%%%%%%%%%%%%%%%%%%
%%%%%%%%%%%%%%%%%%%%%%%%%%%%%%%%%%%%%%%%%%%%%%%%%%%%%%%%%%%%%%

\section{Stokes equations with boundary data in a half-space}
\setcounter{equation}{0}

 We let $N$ and $\Gamma$  be the fundamental solutions to the Laplace equation
 and the heat equation, respectively, i.e.
\begin{align*}
N(x) = \left\{\begin{array}{ll}\vspace{2mm} -\frac{1}{(n-2)\om_n} \frac{1}{|x|^{n-2}},& n \geq 3\\
\frac1{2\pi} \ln |x|, & n =2
\end{array}
\right. \qquad
\Gamma(x,t)  = \left\{\begin{array}{ll} \vspace{2mm}
\frac{1}{\sqrt{4\pi t}^n} e^{ -\frac{|x|^2}{4t} }, &   t > 0,\\
0, & t < 0,
\end{array}
\right.
\end{align*}
where $\om_n$ is the measure of the unit sphere in $\R$. For
convenience, we introduce a tensor $L_{ij}$ and a scalar function
$A$ defined by
\begin{align}\label{L-tensor}
{L}_{ij} (x,t) & =  D_{x_j}\int_0^{x_n} \int_{\Rn}   D_{z_n}
\Ga(z,t) D_{x_i}   N(x-z)  dz,\quad
i,j=1,2,\cdots,n,\\
\notag A(x,t) & =\int_{\Rn}\Ga(z^{\prime},0,t)N(x^{\prime}-z^{\prime},x_n)dz^{\prime}.
\end{align}

The Poisson kernel $(K,\pi ) $ of the Stokes equations is  given as
\[
K_{ij}(x'-y',x_n,t) =  -2 \delta_{ij} D_{x_n}\Ga(x'-y', x_n,t)
-L_{ij} (x'-y',x_n,t)
\]
\begin{equation}\label{Poisson-tensor-K}
+  \de_{jn} \de(t)  D_{x_i} N(x'-y',x_n),
\end{equation}
\[
\pi_j (x'-y',x_n,t )=-2\delta(t) D_{x_j}D_{x_n}N(x'-y',x_n)+4
D_{x_n}D_{x_n}A(x'-y',x_n,t)
\]
\begin{equation}\label{Poisson-tensor-P}
+4D_{t} D_{x_j}A(x'-y',x_n,t),
\end{equation}
where $\delta(t)$ is the Dirac delta function and $\delta_{ij}$ is
the Kronecker delta function.

We recall the following relations on $L$  (see \cite{So}):
\begin{align} \label{1006-3}
\sum_{i=1}^{n} L_{ii} = -2D_{x_n} \Ga, \quad\qquad  L_{in} =
L_{ni}  + B_{in} \,\, \mbox{ if }\, i \neq n,
\end{align}
where
\begin{equation}\label{B-tensor}
B_{in}(x,t) = -\int_{\Rn}D_{x_n} \Ga(x^\prime -z^\prime ,
x_n, t) D_{z_i} N( z^{\prime},0) dz^\prime.
\end{equation}
Furthermore, we remind estimates of $L_{ij}$ defined in
\eqref{L-tensor} (see \cite{So}).
\begin{equation}\label{est-L-tensor}
|D^{l_0}_{x_n} D^{k_0}_{x'} D_{t}^{m_0} L_{ij}(x,t)| \leq
\frac{c}{t^{m_0 + \frac12} (|x|^2 +t )^{\frac12 n + \frac12 k_0}
(x_n^2 +t)^{\frac12 l_0}},
\end{equation}
where $ 1 \leq  i \leq n$ and $1 \leq j \leq n-1$.

It is shown in \cite{So} that the solution $(w,\pi)$ of the Stokes
equations \eqref{Stokes-bvp-200}-\eqref{Stokes-bvp-210} with $ F =0$ is expressed by
\begin{align}\label{rep-bvp-stokes-w}
w_i(x,t) = \sum_{j=1}^{n}\int_0^t \int_{\Rn} K_{ij}(
x^{\prime}-y^{\prime},x_n,t-s)g_j(y^{\prime},s) dy^{\prime}ds,\\
\label{rep-bvp-stokes-p}
\pi(x,t) = \sum_{j=1}^{n}\int_0^t \int_{\Rn}
\pi_j(x^{\prime}-y^{\prime},x_n,t-s) g_j(y^{\prime},s)
dy^{\prime}ds,
\end{align}
where $(K_{ij}, \pi_j)$ is Poisson kernel of the Stokes equations given
in \eqref{Poisson-tensor-K} and \eqref{Poisson-tensor-P}.

Next, we will construct a solution $w$ of Stokes equations via
\eqref{rep-bvp-stokes-w} and \eqref{rep-bvp-stokes-p} for a certain $g$ such that
$L^2$-norm of $\nabla w$ is not bounded. For convenience, we denote
\[
A = \{ y'=(y_1, y_2, \cdots, y_{n-1})\in \Rn \, | \, 1 < |y'| < 2, \,  -2 < y_i
< -1, \, \, 1 \leq i \leq n-1 \}.
\]
We note that if $y' \in A$, then $|y'| \leq  2< -2y_i$ for all $1\leq i\leq n-1$. Let
\begin{align}\label{boundarydata}
g^1_n(y') = \chi_{A}(y'),\qquad  g^2_n(s) =
\bke{s-\frac1{16}}^{-\frac14} \abs{\ln \bke{s-\frac1{16}}}^{-\frac14
-\ep}\chi_{\frac1{16} < s< \frac14}(s),
\end{align}
where $\ep$ is a fixed constant with $0 <\ep \leq \frac14$ and $\chi$ is characteristic function. We introduce
a non-zero boundary data $g:\mathbb R^{n-1}\times \mathbb
R_+\rightarrow  \R$ with only $n-$th component defined
by
\begin{align}\label{0502-6}
g(y', s) = (0, \cdots, 0,  g_n (y',s))= (0, \cdots, 0,\al g^1_n (y')g^2_n(s)),
\end{align}
where $\al> 0$ is determined late on.

The following is two dimensional cartoon for $A$ that is the support of $g_n^1$.

%%%%%%%%%%%%%%%%%%%%%%%%%%%%%%%%%%%%%%%%%%%%%%%%%%%%%%%%%%%%%%%%%%
%%%%%%%%%%%%%%%%%%%%%%%%%%%%%%%%%%%%%%%%%%%%%%%%%%%%%%%%%%%%%%%%%%
%%%%%%%%%%%%%%%%%%%%%%%%%%%%%%%%%%%%%%%%%%%%%%%%%%%%%%%%%%%%%%%%%%

\begin{tikzpicture}[scale=3]

%\draw (0,0).. controls (-0.125, 0.50) and (-0.512, 0.80) .. (-1,1);

\draw (0,0) ellipse (8.5pt and 8.5pt);

\draw (0,0) ellipse (17pt and 17pt);

\draw (0,0) ellipse (34pt and 34pt);

\draw (-0.8, -0.75) node {$A$};

\draw(0.13,1.5) node {$y_2$};

\draw (0,1.5) --(-0.05, 1.45);
\draw (0,1.5) --(0.05, 1.45);

%\draw(1.6,1.1) node {${\mathbb R}^2 (n =3)$};

\draw (-2,0) -- (2,0);

\draw (-2,-1.2) -- (2,-1.2);

\draw (-2,-0.6) -- (2,-0.6);

\draw (0,-1.5) -- (0, 1.5);

 \draw (-0.6, 1.5) -- (-0.6,-1.5);

\draw (-1.2, 1.5) -- (-1.2,-1.5);

\draw (-0.65, -0.7) node {$ \bullet$};

\draw (-0.55, -0.75) node {$  y'$};

\draw (-0.32, -0.1) node {$-\frac12$};

\draw (-0.62, -0.1) node {$-1$};

\draw (-1.22, -0.1) node {$-2$};

\draw (2.05,-0.099) node {$y_1$};

\draw (2,0) --(1.95, 0.05);
\draw (2,0) --(1.95, -0.05);

\draw (-0.2, -0.15) node {$ \bullet$};
\draw (0.0, 0.0) node {$ \bullet$};
\draw (0.05, -0.05) node {$ 0$};

\draw (-0.1,-0.1 ) node {$x'$};

\draw (-0.68,-0.98) -- (-0.6,-0.9);

\draw (-0.743,-0.945) -- (-0.6,-0.8);

\draw (-0.799,-0.899) -- (-0.6,-0.7);

\draw (-0.85,-0.85) -- (-0.6,-0.6);

\draw (-0.9,-0.8) -- (-0.7,-0.6);

\draw (-0.95,-0.74) -- (-0.8,-0.6);

\draw (-0.98,-0.68) -- (-0.9,-0.6);

\draw (0.0, -1.7) node {Figure; A is the support of $g_n^1$ in $\mathbb R^2$};

\end{tikzpicture}

%%%%%%%%%%%%%%%%%%%%%%%%%%%%%%%%%%%%%%%%%%%%%%%%%%%%%%%%%%%%%%%%%%
%%%%%%%%%%%%%%%%%%%%%%%%%%%%%%%%%%%%%%%%%%%%%%%%%%%%%%%%%%%%%%%%%%
%%%%%%%%%%%%%%%%%%%%%%%%%%%%%%%%%%%%%%%%%%%%%%%%%%%%%%%%%%%%%%%%%%

\begin{rem}\label{rem0101-1}
We note that
$g \in L^4(0,T;\dot{B}^{-\frac{1}{p}}_{pp}(\Rn))$ for $\frac{n}{n-1}  <p <
\infty$. In fact, for $ 1 < r <\infty$ satisfying $ -\frac{n-1}r = -\frac1p -\frac{n-1}p$, we can see that (refer to e.g. \cite[Theorem 6.5.1]{BL})
\begin{align*}
\| g(t)\|_{ \dot{B}^{-\frac{1}{p}}_{pp}(\Rn) } \leq c \| g(t)\|_{L^r(\Rn)} < \infty.
\end{align*}
%See Theorem 6.5.1 in \cite[Theorem 6.5.1]{BL}.
\end{rem}

\begin{prop}\label{lemma0406}
Let $w$ be a solution of the Stokes equations
\eqref{Stokes-bvp-200}-\eqref{Stokes-bvp-210} with $F=0$ defined by
\eqref{rep-bvp-stokes-w}, where the boundary data $g$ is given in
\eqref{boundarydata} and \eqref{0502-6}. Let $ \frac{n}{n-1} < p < \infty$, $t_0 = \frac1{16}$ and
$r \leq \frac12$. Then, $w$ satisfies
\begin{equation}\label{L4Lp-w}
\| w\|_{L^4(0,\infty;L^p(\R_+))}<\infty,
\end{equation}
\begin{equation}\label{L2-grad-w}
\int_{t_0}^{ t_0 + r^2} \int_{B(0, r)} |D_{x_n} w_i(x,t) |^2 dxdt =
\infty \qquad  i =1,\cdots, n-1,
\end{equation}
\begin{equation}\label{L2-grad-w-n}
\int_{t_0}^{ t_0 + r^2} \int_{B(0, r)} |D_{x} w_n(x,t) |^2 dxdt <\infty.
\end{equation}
\end{prop}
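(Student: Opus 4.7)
The plan is to handle the three claims separately, exploiting that only the $n$-th component of $g$ is nonzero, so \eqref{rep-bvp-stokes-w} collapses to
\[
w_i(x,t) = \int_0^t \int_{\Rn} K_{in}(x'-y', x_n, t-s)\, g_n(y', s)\, dy'\, ds.
\]
For \eqref{L4Lp-w} I will apply Proposition \ref{thm-stokes} with $F = 0$ and $q = 4$; Remark \ref{rem0101-1} supplies $g \in L^4(0, \infty; \dot B^{-\frac{1}{p}}_{pp}(\Rn))$ for every $p \in (n/(n-1), \infty)$, which immediately yields the global $L^4_t L^p_x$ bound.

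For \eqref{L2-grad-w-n}, I will use the identity $\sum_{i=1}^{n} L_{ii} = -2 D_{x_n}\Gamma$ from \eqref{1006-3} to replace $L_{nn}$ in $K_{nn} = -2 D_{x_n}\Gamma - L_{nn} + \delta(t) D_{x_n} N$ by $-2 D_{x_n}\Gamma - \sum_{i<n} L_{ii}$, so every $L_{ii}$ that appears has second index $\le n-1$ and thus obeys the pointwise estimate \eqref{est-L-tensor}. Together with the smoothness of $D_{x_n} N(x'-y', x_n)$ on the region where $x' \in B(0, r)$ and $y' \in A$ (since $|x'-y'| \ge 1/2$ there) and the finiteness of $\int_{t_0}^{t_0+r^2}|g_n^2(t)|^2\, dt$, a direct size estimate combined with Minkowski's inequality gives the finiteness of $\|\nabla w_n\|_{L^2(B(0,r)\times(t_0,t_0+r^2))}$.

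The real content is \eqref{L2-grad-w}. For $i \in \{1, \dots, n-1\}$, I will use $L_{in} = L_{ni} + B_{in}$ from \eqref{1006-3} to split
\[
K_{in} = -L_{ni} - B_{in} + \delta(t) D_{x_i} N,
\]
yielding $w_i = w_i^{(1)} + w_i^{(2)} + w_i^{(3)}$. In $w_i^{(1)}$ the kernel $L_{ni}$ has second index $\le n-1$ and is controlled by \eqref{est-L-tensor}; the piece $w_i^{(3)}(x,t) = g_n^2(t) \int_A D_{x_i} N(x'-y', x_n)\, dy'$ is smooth in $x$ because $A$ is bounded away from $\{(x',0) : x' \in B(0, r)\}$. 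Hence $D_{x_n} w_i^{(1)}$ and $D_{x_n} w_i^{(3)}$ both lie in $L^2(B(0, r)\times(t_0, t_0+r^2))$, so any $L^2$ blow-up must originate from $w_i^{(2)}$. Writing
\[
B_{in}(X, x_n, \tau) = \frac{x_n\, e^{-x_n^2/(4\tau)}}{2\tau\, \omega_n\, (4\pi\tau)^{n/2}} \int_{\Rn} e^{-|X-z'|^2/(4\tau)}\, \frac{z_i}{|z'|^n}\, dz',
\]
differentiating in $x_n$, and noting that for $x' \in B(0, r)$ and $y' \in A$ every coordinate of $X = x'-y'$ exceeds $1/2$, the lower bound highlighted in the introduction applies to the inner $z'$-integral and yields a pointwise estimate
\[
D_{x_n} w_i^{(2)}(x, t) \gtrsim \int_{t_0}^{t} \frac{e^{-x_n^2/(4(t-s))}\bigl(1 - e^{-c/(t-s)}\bigr)}{(t-s)^{3/2}}\, g_n^2(s)\, ds.
\]

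The main obstacle will be to turn this pointwise lower bound into a divergent $L^2$-norm on $B(0,r)\times (t_0, t_0+r^2)$. My strategy is to change variables $\tau = s - t_0$, $\eta = t - t_0$, restrict the $x_n$-integration to the range $x_n^2 \lesssim \eta - \tau$ so that the Gaussian factor stays bounded below by a positive constant, and then compute the resulting space-time integral directly. The exponent $\tfrac{1}{4} + \epsilon$ in $g_n^2$ is tuned precisely so that the convolution against the $(t-s)^{-3/2}$ kernel produces, after squaring and integrating in $(x_n, t)$, a logarithmically divergent integral as $\eta \to 0^+$. Positivity of the inner $z'$-integral, which is essential to rule out cancellation in the time convolution, is guaranteed by the geometric placement of $A$, forcing each coordinate $X_i$ to be strictly positive and bounded below. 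This is precisely where the singular choice of boundary data \eqref{boundarydata} becomes indispensable.
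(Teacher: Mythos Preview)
Your treatment of \eqref{L4Lp-w} and \eqref{L2-grad-w-n} follows the paper's approach; the Minkowski shortcut you propose for $w_n$ is a legitimate (and arguably cleaner) alternative to the paper's case analysis in $x_n^2$ versus $t-t_0$.

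For \eqref{L2-grad-w}, however, there is a genuine gap. When you differentiate your (correct) formula
\[
B_{in}(X,x_n,\tau)=\frac{x_n\,e^{-x_n^2/(4\tau)}}{2\tau\,\omega_n(4\pi\tau)^{n/2}}\int_{\Rn}e^{-|X-z'|^2/(4\tau)}\frac{z_i}{|z'|^n}\,dz'
\]
in $x_n$, the prefactor $x_n e^{-x_n^2/(4\tau)}$ produces a sign-changing factor of the type $\bigl(1-\tfrac{x_n^2}{2\tau}\bigr)$; in the paper's normalization this is the factor $(-2+4x_n^2/(t-s))$ in \eqref{w12}. Your displayed lower bound on $D_{x_n}w_i^{(2)}$ simply drops this factor, so it is not valid pointwise. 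Worse, your proposed restriction ``$x_n^2\lesssim\eta-\tau$'' is ill-posed because $\tau$ is the inner integration variable; and on any region of the type $x_n^2\lesssim t-t_0$ the factor changes sign inside the $s$-integral, so the positive and negative contributions are of the same order and no lower bound follows without further work.

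The paper resolves this by working on the \emph{opposite} region $t-t_0<x_n^2$. There $t-s<x_n^2$ for every $s\in(t_0,t)$, so $(-2+4x_n^2/(t-s))>2$ uniformly and the integrand in \eqref{w12} has a definite sign throughout. This yields the clean pointwise bound \eqref{grad-w1}, and the $L^2$ integral over $\{\sqrt{t-t_0}<x_n<r\}$ then reduces (after the change $x_n\mapsto\sqrt{t-t_0}\,x_n$) to $\int_0^{r^2}\eta^{-1}|\ln\eta|^{-\frac12-2\epsilon}\,d\eta=\infty$, which is exactly the logarithmic divergence you anticipated. Note also that the paper does not need your stronger claim $D_{x_n}w_i^{(1)}\in L^2$; it only uses the pointwise upper bound \eqref{w11-1} on the same region $t-t_0<x_n^2$ to subtract from the lower bound on $|D_{x_n}w_i^{(2)}|$.
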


\begin{proof}
We recall  Proposition \ref{thm-stokes} and Remark \ref{rem0101-1}, which implies that for $\frac{n}{n-1} <
p < \infty$
\begin{align*}
\| w\|_{L^4(0,\infty;L^p(\R_+))}\leq c_p \| g\|_{L^4(0,\infty;\dot{B}^{-\frac{1}{p}}_{pp}(\Rn))}  \leq c \| g\|_{L^4(0, \infty;L^r(\Rn))} < \infty.
\end{align*}
Hence, the first inequality \eqref{L4Lp-w} is immediate.

Next, we show the estimate \eqref{L2-grad-w}.  Since the arguments are similar,
we only prove the case of $i =1$. It follows from \eqref{Poisson-tensor-K} and
\eqref{rep-bvp-stokes-w} that
\begin{align*}
w_1(x,t) & =    \int_0^t   \int_{A} L_{1 n}(x'- y', x_n, t
-s) g_n(y', s) dy' ds -   \int_{A} D_{x_1} N(x' -y', x_n)g_n(y',t) dy'\\
&:=  w^1_{1}(x,t) + w^2_{1}(x,t).
\end{align*}
We note that $|x' -y'| \geq \frac12$ for $|x'| \leq \frac12  $ and $
y' \in A$, and so, for $(x, t) \in B(0, \frac12) \times (t_0,
\frac14) $ we have
\begin{align}\label{0411-w}
|\na w_1^2(x,t)| \leq c (t-t_0)^{-\frac14} |\ln ( t -t_0)|^{-\frac14 -\ep}.
\end{align}
Since $L_{1n} = L_{n1} + B_{1n}$ by the second equality of \eqref{1006-3}, we divide $w_1^1= w_1^{11} + w_1^{12}$ by
\begin{align}\label{0411-v}
\notag w^1_1(x,t) &=  \int_0^t \int_{\Rn} L_{n1}(x'- y', x_n, t -s) g_n(y',
s)dy'ds\\
\notag & \qquad  +\int_0^t \int_{\Rn} B_{1n}(x'- y', x_n, t -s) g_n(y', s)dy'ds\\
&: =
w^{11}_1 (x,t) + w^{12}_1 (x,t).
\end{align}

Now, we estimate of  $w^{11}_1$.  We assume that $ t -t_0 < x_n^2$. Note that $|x' -y'| \geq \frac12$ for $x \in B(0, \frac12) $ and $y' \in A$. Due to \eqref{est-L-tensor},
for $(x,t) \in B(0, \frac12) \times (t_0, \frac14)$, we have
\begin{align}
%\label{0501-1}
\notag |D_x w^{11}_1 (x,t)| %&\leq   c\int_{t_0}^t \int_{1 \leq |y'| \leq 2 }
%| D_x L_{n1}(x'- y', x_n, t -s)| ( s -t_0)^{-\frac14}|\ln ( s
%-t_0)|^{-\frac14 -\ep}  dy' ds\\
 & \leq   c\int_{t_0}^t \int_{1 \leq |y'| \leq 2 } \frac1{ (t
-s)^\frac12 ( x_n^2 + t -s)^\frac12  ( |x' -y'|^2 + x_n^2 + t
-s)^\frac{n}2 }\\
\notag &\qquad \qquad \qquad \qquad \times  (s -t_0)^{-\frac14} |\ln (s- t_0)|^{-\frac14 -\ep} dy'
ds\\
\notag  &\leq   c
  x_n^{-1} \int_{t_0}^{t}  \frac1{ (t
-s)^\frac12  }
 (s-  t_0)^{-\frac14} |\ln (s- t_0)|^{-\frac14 -\ep} ds\\
\notag & = c x_n^{-1} \int_{t_0}^{\frac{t_0 +t}2}   \frac1{ (t
-s)^\frac12  }
 (s-  t_0)^{-\frac14} |\ln (s- t_0)|^{-\frac14 -\ep} ds\\
\notag&  \qquad  +  cx_n^{-1} \int^{t}_{\frac{t_0 +t}2}   \frac1{ (t
-s)^\frac12  }
 (s-  t_0)^{-\frac14} |\ln (s- t_0)|^{-\frac14 -\ep} ds\\
\label{w11-0}
 & = I_1 + I_2.
\end{align}
Since $t - t_0 < x_n^2$, we have
\begin{align} \label{0409I2}
\notag I_2 & \leq c (t-  t_0)^{-\frac34} |\ln (t- t_0)|^{-\frac14 -\ep} \int^{t}_{\frac{t_0 +t}2}   \frac1{ (t
-s)^\frac12  }   ds\\
 &  =
c   (t-  t_0)^{-\frac14} |\ln (t- t_0)|^{-\frac14 -\ep}.
\end{align}
On the other hand, noting that for $0 < a < \frac12$, we observe
\begin{align}\label{0409-1}
 c_1  a^{\frac34} |\ln a|^{-\frac14 -\ep} \leq   \int_0^a s^{-\frac14 } |\ln s|^{-\frac14 -\ep} ds \leq c_2  a^{\frac34} |\ln a|^{-\frac14 -\ep}.
\end{align}
Indeed, via Hospital's Theorem, we have
\[
\lim_{a \ri 0} \frac{ \int_0^a s^{-\frac14 } |\ln s|^{-\frac14 -\ep} ds }{ a^{\frac34} |\ln a|^{-\frac14 -\ep}} = \lim_{a \ri 0} \frac{ a^{-\frac14 } |\ln a|^{-\frac14 -\ep}  }{ \frac34 a^{-\frac14} |\ln a|^{-\frac14 -\ep} -(\frac14 +\ep)  a^{-\frac14} |\ln a|^{-\frac54 -\ep}} = \frac43.
\]
Since $a^{\frac34} |\ln a|^{-\frac14 -\ep}, \,    \int_0^a s^{-\frac14 } |\ln s|^{-\frac14 -\ep} ds > 0$ for $0 < a < \frac12$, which implies \eqref{0409-1}.

Due to \eqref{0409-1}, since $t -t_0 < x_n^2$, we obtain
\begin{align}
\notag I_1 & \leq  c (t -t_0)^{-1}  \int_{t_0}^{\frac{t_0 +t}2}
 (s-  t_0)^{-\frac14} |\ln (s- t_0)|^{-\frac14 -\ep} ds\\
\label{0409-2}
& \leq   c    (t-t_0 )^{-1}  \int_0^{\frac{t -t_0 }2}
 s^{-\frac14} |\ln s|^{-\frac14 -\ep} ds
 \leq    c   (t-  t_0)^{-\frac14} |\ln (t- t_0)|^{-\frac14 -\ep}.
\end{align}
Summing up \eqref{w11-0}, \eqref{0409I2} and \eqref{0409-2},
for $(x,t) \in B(0, \frac12) \times (t_0, \frac14)$ and $ t -t_0 < x_n^2$,  we have
\begin{align}\label{w11-1}
|D_x w^{11}_1 (x,t)| \leq  c   (t-  t_0)^{-\frac14} |\ln (t- t_0)|^{-\frac14 -\ep}.
\end{align}

Next, we estimate $w_1^{12}$. Reminding \eqref{B-tensor}, we note that
\begin{align}\label{w12}
\notag D_{x_n} w^{12}_1(x,t) &= c_n \int_{t_0}^t \int_{\Rn} g_n(y', s)  (t-s)^{-\frac{n+2}2}    \bke{-2 + \frac{4 x_n^2}{t-s}}\\
 & \qquad \qquad \times  e^{-\frac{x_n^2}{t-s}}
\int_{\Rn}  e^{-\frac{|x' -y'-z'|^2}{t-s}}   \frac{z_1}{|z'|^n}   dz' dy' ds.
\end{align}
For fixed $X' = x' -y'$, we divide $\Rn$ by three disjoint sets
$D_1, D_2$ and $D_3$ defined by
\[
D_1=\bket{z'\in\Rn: |X'-z'| \leq \frac1{10} |X'|},
\]
\[
D_2=\bket{z'\in\Rn: |z'| \leq \frac1{10} |X'|}, \qquad D_3=\Rn\setminus
(D_1\cup D_2).
\]
We then split the following integral into three terms as follows:
\begin{align*}
\int_{\Rn} e^{-\frac{|X'-z'|^2}{t-s}} \frac{z_1}{|z'|^n} dz' =
\int_{D_1}\cdots + \int_{D_2} \cdots+ \int_{D_3}\cdots := J_1 + J_2
+J_3.
\end{align*}
Since $\int_{D_2}\frac{z_1}{|z'|^n} dz' =0$,  we have $\int_{D_2} \frac{z_1}{|z'|^n} e^{-\frac{|X'-z'|^2}{t-s}}   dz' = \int_{D_2} \frac{z_1}{|z'|^n} \big(
e^{-\frac{|X'-z'|^2}{t-s}} -  e^{-\frac{|X'|^2}{t-s}} \big) dz'$.  Using the Mean-value Theorem, we  have
\begin{align}\label{est-J2}
\notag |J_2| & =\abs{\int_{D_2}   \frac{z_1}{|z'|^n} \big(
e^{-\frac{|X'-z'|^2}{t-s}} -  e^{-\frac{|X'|^2}{t-s}} \big) dz'
}\leq   c(t-s)^{-1 }|X'| e^{-c\frac{|X'|^2}{t-s}}  \int_{D_2}
\frac{1}{|z'|^{n-2}}   dz'\\
&\leq  c (t-s)^{-1 }|X'|^2 e^{-c\frac{|X'|^2}{t-s}} \leq  c
e^{-c\frac{|X'|^2}{t-s}}.
\end{align}
Since $\int_{|z'| > a} e^{-|z'|^2} dz' \leq c_1 e^{-c_2 a^2}, \, a > 0$, we have
\begin{align}
\notag|J_3| &\leq    \frac{c}{|X'|^{n-1}}\int_{D_3} e^{-\frac{|z' -X'|^2}{t-s}}
dz' \leq   \frac{c}{|X'|^{n-1}} \int_{\{|z'-X'| \geq \frac1{10} |X'|\}} e^{-\frac{|z'-X'|^2}{t-s}} dz'\\
\notag & =   \frac{c(t-s)^{\frac{n-1}2}}{|X'|^{n-1}} \int_{\{|z'| \geq \frac1{10}\frac{ |X'|}{\sqrt{t-s}}\}} e^{- |z'|^2} dz'\\
\label{est-J3}
&\leq   \frac{c(t-s)^{\frac{n-1}{2}}}{|X'|^{n-1}}  e^{-c\frac{ |X'|^2}{t-s}}\leq   c
e^{-c \frac{|X'|^2}{t-s}}.
\end{align}
Since    $ \frac12 \leq |X'| \leq 2$, from \eqref{est-J2} and \eqref{est-J3}, we have
\begin{align}\label{0604}
|J_2|, \, |J_3| \leq   c e^{-c\frac1{ ( t-s)}}
\end{align}

Now, we estimate $J_1$. We note that for $|x'| < \frac12$ and $y'
\in A$ we see that
$X_1 = x_1 - y_1 \geq -\frac12 + 1 =\frac12$ and
$\frac15 |X'| \leq \frac15 (|x'| + | y'|) \leq \frac15 \cdot \frac52
\leq X_1$. Then, for $|X'-z'| \leq \frac1{10} |X'| $,  we have
\[
z_1 = z_1 -X_1 + X_1 \geq X_1 - |z_1 - X_1| \geq X_1 - |X' -z'|
\]
\[
\geq X_1 - \frac1{10} |X'| \geq \frac15|X'| - \frac1{10} |X'| = \frac1{10} |X'|.
\]
Therefore,  we obtain
\begin{align}\label{est-J1}
\notag J_1 & = \int_{D_1} e^{-\frac{|X'-z'|^2}{t-s}} \frac{z_1}{|z'|^n} dz'
\geq \frac{c}{|X'|^{n-1}} \int_{D_1} e^{-\frac{|X'-z'|^2}{t-s}}
dz'\\
\notag & = \frac{c(t-s)^{\frac{n-1}2}}{|X'|^{n-1}}
\int_{\{|z'| \leq \frac1{10}\frac{ |X'|}{\sqrt{t-s}}\}}  e^{-|z'|^2}
dz'\\
& \geq \frac{c(t-s)^{\frac{n-1}2}}{|X'|^{n-1}} \geq c(t-s)^{\frac{n-1}2} .
\end{align}
Combining \eqref{0604} and \eqref{est-J1}, we
obtain
\begin{equation}\label{est-J1J2J3}
\int_{\Rn} e^{-\frac{|x'-y'-z'|^2}{(t-s)}} \frac{z_1}{|z'|^n} dz'
\geq   c  \Big(   (t -s)^{\frac{n-1}2}  -
e^{-c\frac{1}{(t-s)}} \Big) \geq   c (t -s)^{\frac{n-1}2} \Big(1     -
e^{-c\frac{1}{(t-s)}} \Big).
\end{equation}
Noting that $ -2 +  \frac{4 x_n^2}{t-s} > 2$
for $t-t_0  < x_n^2$ and   $t_0 < s< t$,  it follows from \eqref{w12} and \eqref{est-J1J2J3} that
\begin{align*}
|D_{x_n} w^{12}_1(x,t)| &\geq c \int_{t_0}^t \int_{A } (s-t_0)^{-\frac14}|\ln ( s -t_0)|^{-\frac14 -\ep}  (t-s)^{-\frac32}
  e^{-\frac{x_n^2}{t-s}}
\big( 1 - e^{-c\frac1{t-s}}  \big) dy' ds\\
&\geq c \int_{t_0}^t   (s-t_0)^{-\frac14}  |\ln (s-t_0)|^{-\frac14 -\ep} (t-s)^{-\frac32}     e^{-\frac{x_n^2}{t-s}} ds -c.
\end{align*}
Splitting the above integral over intervals $(t_0, \frac12 (t +
t_0))$ and $(\frac12 (t + t_0), t)$, we first compute
\begin{align*}
&\int_{t_0}^{\frac12 (t + t_0) }  (s-t_0)^{-\frac14} |\ln (
s-t_0)|^{-\frac14 -\ep} (t -s)^{-\frac32} e^{-\frac{x_n^2}{t -s}}    ds\\
&
\geq  c (t-t_0)^{-\frac32} e^{-\frac{x_n^2}{t -t_0}}
\int_{t_0}^{\frac12 (t + t_0)}       (s-t_0)^{-\frac14} |\ln
(s-t_0)|^{-\frac14 -\ep}  ds\\
&
\geq  c (t-t_0)^{-\frac32} e^{-\frac{x_n^2}{t -t_0}}
\int_0^{\frac12 (t - t_0)}       s^{-\frac14} |\ln
s|^{-\frac14 -\ep}  ds\\
&
\geq  c(t-t_0)^{-\frac34}|\ln (t-t_0)|^{-\frac14 -\ep}
e^{-\frac{x_n^2}{t-t_0}}.
\end{align*}
For the third inequality, we use $\int_0^a   s^{-\frac14} |\ln
s|^{-\frac14 -\ep}  ds \geq c   a^{\frac34} |\ln
a|^{-\frac14 -\ep}  $ for $ a \leq \frac12$ (see \eqref{0409-1}).

On the other hand, since $\int_a^\infty s^\frac12 e^{-s} ds \geq c
a^\frac12 e^{- a}$ for $ a > 1$, we have
\begin{align*}
&\int_{ \frac12 ( t + t_0)}^t (s-t_0)^{-\frac14} |\ln
(s-t_0)|^{-\frac14 -\ep} (t -s)^{-\frac32} e^{-\frac{x_n^2}{t -s}}
   ds\\
&\geq  c(t-t_0)^{-\frac14}|\ln (t-t_0)|^{-\frac14 -\ep} \int_{\frac12
(t+t_0)}^t (t -s)^{-\frac32} e^{-\frac{x_n^2}{t -s}}       ds\\
&=   c(t-t_0)^{-\frac14}|\ln (t-t_0)|^{-\frac14 -\ep} \int_0^{\frac12
( t-t_0)} s^{-\frac32} e^{-\frac{x_n^2}s}        ds\\
&\geq  c(t-t_0)^{-\frac14}|\ln (t-t_0)|^{-\frac14 -\ep} x_n^{-1}
\int^\infty_{\frac{2x_n^2}{t-t_0}} s^{\frac12} e^{-s}         ds\\
&\geq c(t-t_0)^{-\frac34} |\ln (t-t_0)|^{-\frac14 -\ep}
e^{-c\frac{x_n^2}{t-t_0}}.
\end{align*}
Adding the estimates above,  for $(x,t) \in B (0,\frac12) \times (t_0, \frac12)$ and  $t -t_0 \leq x_n^2$,  we have
\begin{align}\label{0411-v-2}
|D_{x_n} w^{12}_1(x,t)| &\geq c (t-t_0)^{-\frac34} |\ln (t-t_0)|^{-\frac14 -\ep} e^{-\frac{x_n^2}{t-t_0}} -c.
\end{align}
From \eqref{0411-w}, \eqref{w11-1} and \eqref{0411-v-2},
for $t -t_0 \leq x_n^2$ and $(x,t) \in B(0,\frac12) \times (t_0, \frac12)$, we get
\begin{equation}\label{grad-w1}
|D_{x_n} w_1(x,t)|
  \geq  c(t-t_0)^{-\frac34}|\ln (t-t_0)|^{-\frac14 -\ep}  e^{-\frac{x_n^2}{t-t_0}}
  -c (t-t_0)^{-\frac14} |\ln ( t -t_0)|^{-\frac14 -\ep}.
\end{equation}
Since
\begin{align*}
\notag & \int_{t_0}^\frac14 \int_{   \{(t-t_0)^\frac12 \leq x_n \leq
1 \}}  (t-t_0)^{-\frac12 } |\ln (t-t_0)|^{-\frac12 -2\ep}  dx_n dt <
\infty,
\end{align*}
from  \eqref{grad-w1}, for any $\ep$ with $0<\ep \leq  \frac14$,  we obtain
\begin{align}
\notag & \int_{t_0}^{t_0 + r^2} \int_{B_r(0)} |D_{x_n} w_1(x,t)|^2 dxdt \\
\notag
&\geq c\int_{t_0}^{t_0 + r^2}  \int_{\sqrt{t-t_0}}^r
\int_{|x'| < r}  (t-t_0)^{-\frac32} |\ln (t-t_0)|^{-\frac12 -2\ep}
e^{-\frac{x_n^2}{t-t_0}}  dx'dx_n dt-c\\
\notag
&\geq cr^{n-1} \int_0^{r^2}  \int_1^{\frac{r}{\sqrt{t}}}
t^{-1 } |\ln t|^{-\frac12 -2\ep}  e^{-x_n^2} dx_n dt-c\\
\label{L2-blowup}
&\geq cr^{n-1} \int_0^{\frac14 r^2}      t^{-1 } |\ln t|^{-\frac12
-2\ep} dt -c= \infty.
\end{align}
Therefore, we complete the proof of \eqref{L2-grad-w}.

It remains to prove \eqref{L2-grad-w-n}. It follows from \eqref{Poisson-tensor-K} and
\eqref{rep-bvp-stokes-w} that
\begin{align}\label{0212-1}
\notag w_n(x,t) & =   \int_0^t \int_{A} D_{x_n} \Ga(x'- y', x_n, t
-s)  g_n( y', s)  dy' ds\\
\notag  & \qquad    +    \int_0^t   \int_{A} L_{n n}(x'- y', x_n, t
-s) g_n( y', s) dy' ds
  -    \int_{A} D_{x_n} N(x' -y', x_n) g_n( y', t) dy'\\
& :=  w^1_n(x,t) + w^2_n(x,t ) + w^3_n (x,t).
\end{align}
As the same reason with \eqref{0411-w},  we obtain
\begin{align}\label{0502-5}
\int_{t_0}^{t_0 + r^2} |D_x w^3_n (x,t)|^2 dxdt < \infty.
\end{align}
We note that  by the first equality  \eqref{1006-3}, the kernel of $w^2_n$ also satisfies  \eqref{est-L-tensor}. With the same estimate \eqref{w11-1}, for $(x,t) \in B(0, \frac12) \times (t_0, \frac14)$ and $ t -t_0 < x_n^2$, we have
\begin{align}\label{wn1-1}
 |D_x w^2_n (x,t)|\leq   c   (t-  t_0)^{-\frac14} |\ln (t- t_0)|^{-\frac14 -\ep}.
\end{align}

Now, we assume that $ x_n^2 < t -t_0  $. From \eqref{est-L-tensor},
for $(x,t) \in B(0, \frac12) \times (t_0, \frac14)$, we have
\begin{align}
\notag |D_x w_n^2 (x,t)|
&\leq   c\int_{t_0}^t  \frac1{ (t
-s)^\frac12 ( x_n^2 + t -s)^\frac12  }  (s -t_0)^{-\frac14} |\ln (s- t_0)|^{-\frac14 -\ep}ds\\
\notag&\leq   c\int_{t_0}^{t-x_n^2}  \frac1{ (t
-s)^\frac12 ( x_n^2 + t -s)^\frac12  }  (s -t_0)^{-\frac14} |\ln (s- t_0)|^{-\frac14 -\ep}ds\\
\notag&\qquad +  c \int_{t -x_n^2}^t \frac1{ (t
-s)^\frac12 ( x_n^2 + t -s)^\frac12  }  (s -t_0)^{-\frac14} |\ln (s- t_0)|^{-\frac14 -\ep} ds\\
\label{w11-2}
& = II_1 + II_2.
\end{align}
Let  $2 x_n^2 < t-t_0$ ($\frac{t+t_0}2 < t - x_n^2$). Note that for $t -x_n^2 < s<t$, we have  $(s -t_0)^{-\frac14} |\ln (s- t_0)|^{-\frac14 -\ep} \leq c (t -t_0)^{-\frac14} |\ln (t- t_0)|^{-\frac14 -\ep}$. Hence, we have
\begin{align}\label{0501-2}
\notag II_2 &\leq  cx_n^{-1}  (t -t_0)^{-\frac14} |\ln (t- t_0)|^{-\frac14 -\ep}\int_{t -x_n^2}^t  \frac1{ (t
-s)^\frac12   }  ds\\
& = c   (t -t_0)^{-\frac14} |\ln (t- t_0)|^{-\frac14 -\ep}.
\end{align}
Since $\frac{t+t_0}2 < t - x_n^2$, from \eqref{0409-1}, we have
\begin{align} \label{0409I2-2}
\notag II_1 & \leq c (t -t_0)^{-1} \int_{t_0}^{\frac{t +t_0}2}      (s -t_0)^{-\frac14} |\ln (s- t_0)|^{-\frac14 -\ep}ds\\
 \notag & \qquad  + c (t -t_0)^{-\frac14} |\ln (t- t_0)|^{-\frac14 -\ep}\int_{\frac{t+t_0}2}^{t-x_n^2}   (t-s)^{-1}  ds\\
\notag   &  = c (t -t_0)^{-1} \int_{0}^{\frac{t -t_0}2}      s^{-\frac14} |\ln s|^{-\frac14 -\ep}ds
   + c (t -t_0)^{-\frac14} |\ln (t- t_0)|^{-\frac14 -\ep}\ln(\frac{2x_n^2}{t-t_0})\\
  &  \leq c (t -t_0)^{-\frac14  } |\ln (t-t_0)|^{-\frac14 -\ep}
   + c (t -t_0)^{-\frac14} |\ln (t- t_0)|^{-\frac14 -\ep}\ln(\frac{2x_n^2}{t-t_0}).
\end{align}
Hence, summing up \eqref{w11-2}, \eqref{0501-2} and \eqref{0409I2-2},
for $(x,t) \in B(0, \frac12) \times (t_0, \frac14)$ and $ 2x_n^2 <  t -t_0 $, we have
\begin{align}\label{0502w11-2}
|D_x  w_n^2 (x,t)| \leq  c   (t-  t_0)^{-\frac14} |\ln (t- t_0)|^{-\frac14 -\ep} \big( 1  + \ln(\frac{2x_n^2}{t-t_0}) \big).
\end{align}

Next, we assume that $ x_n^2 < t -t_0<2x_n^2  $ ( $\frac{t+t_0}2 > t - x_n^2> t_0$). From \eqref{0409-1}, we have
\begin{align}
\notag II_1
\notag &\leq   c (t -t_0)^{-1} \int_{t_0}^{t-x_n^2}   (s -t_0)^{-\frac14} |\ln (s- t_0)|^{-\frac14 -\ep}ds\\
\notag & \leq   c (t -t_0)^{-1} \int_0^{t-t_0}   s^{-\frac14} |\ln s|^{-\frac14 -\ep}ds\\
& \leq  c   (t-  t_0)^{-\frac14} |\ln (t- t_0)|^{-\frac14 -\ep},
\label{w11-3}
\end{align}
\begin{align} \label{0501-3}
\notag II_2&\leq c  (t -t_0)^{-1}  \int_{t -x_n^2}^{\frac{t +t_0}2}      (s -t_0)^{-\frac14} |\ln (s- t_0)|^{-\frac14 -\ep}ds\\
 &\qquad   + c  (t -t_0)^{-\frac34} |\ln (t- t_0)|^{-\frac14 -\ep} \int_{\frac{t+t_0}2}^{t} \frac1{ (t
-s)^\frac12  }   ds\\
\notag    &\leq  c    (t-  t_0)^{-\frac14} |\ln (t- t_0)|^{-\frac14 -\ep}.
\end{align}

From the  estimates \eqref{w11-2},  \eqref{w11-3} and  \eqref{0501-3},  for $(x,t) \in B_{\frac12} \times (t_0, \frac14)$ and $ x_n^2 < t -t_0<2x_n^2  $
\begin{align}\label{0502-2}
|D_x  w_n^2 (x,t)| \leq  c   (t-  t_0)^{-\frac14} |\ln (t- t_0)|^{-\frac14 -\ep}.
\end{align}

From \eqref{wn1-1}, \eqref{0502w11-2} and \eqref{0502-2}, we have
\begin{align}\label{0502-3}
\notag \int_{t_0}^{t_0 + r^2} \int_{B_r} | \na w^2_n (x,t)|^2 dxdt
 &  \leq c\int_{t_0}^{t_0 + r^2} \int_0^{   \sqrt{\frac{t -t_0}2}}    (t -t_0)^{-\frac12} |\ln (t- t_0)|^{-\frac12 -2\ep}\ln^2(\frac{2x_n^2}{t-t_0}) dx_ndt\\
&\qquad  + \int_{t_0}^{t_0 + r^2} \int_0^r (t-  t_0)^{-\frac12} |\ln (t- t_0)|^{-\frac12 -2\ep} dx_ndt
 < \infty.
\end{align}
Similarly , we have
\begin{align}\label{0502-4}
 \int_{t_0}^{t_0 + r^2} \int_{B_r} | \na w^1_n (x,t)|^2 dxdt
 & < \infty.
\end{align}

Hence,  from \eqref{0212-1}, \eqref{0502-5}, \eqref{0502-3} and \eqref{0502-4}, we complete the proof of \eqref{L2-grad-w-n}, and thus we deduce the proposition.
\end{proof}

\begin{rem}
From the estimates of the proof of Proposition \ref{lemma0406} (in particular $w^{12}_1$), for $(x,t) \in B_{\frac12} \times (t_0, \frac14)$, we can  obtain
\begin{align*}
| D_{x_n} w_i(x,t)| & \leq c (t-t_0)^{-\frac34} |\ln (t-t_0)|^{-\frac14 -\ep} e^{-\frac{x_n^2}{t-t_0}}\\
 &\qquad      + c(t -t_0)^{-\frac14} |\ln (t- t_0)|^{-\frac14 -\ep}\ln(\frac{2x_n^2}{t-t_0}) \chi_{x_n^2\leq t -t_0} \quad i = 1, \cdots, n-1
\end{align*}
and which implies  $D_{x_n} w_i\in L^4(t_0, \frac14; L^p( B_\frac12))$ for any $1\le p<2$.
\end{rem}

%%%%%%%%%%%%%%%%%%%%%%%%%%%%%%%%%%%%%%%%%%%%%%%%%%%%%%%%%%%%
%%%%%%%%%%%%%%%%%%%%%%%%%%%%%%%%%%%%%%%%%%%%%%%%%%%%%%%%%%%%
%%%%%%%%%%%%%%%%%%%%%%%%%%%%%%%%%%%%%%%%%%%%%%%%%%%%%%%%%%%%

\section{Proof of Theorem \ref{maintheorem} }
\label{proofStokes}
\setcounter{equation}{0}

\subsection{Stokes equations}
We take a sequence  $
\{ g_{n,k}| k=1,2,\cdots\}$ with $ g_{n,k}\in C^\infty_c (A \times
(t_0, \frac14)) $ such that $g_{n,k}$ goes to $ g_n $ in $L^4(0,
\infty; L^p(\Rn) )$ as $k \ri \infty$, where $g_n$ is defined in \eqref{0502-6}. Let $\tilde{g}_k =(0,
g_{nk})$ and $w_k$ be solution of the Stokes equations
\eqref{Stokes-bvp-200}-\eqref{Stokes-bvp-210} defined by
\eqref{rep-bvp-stokes-w}. Since the boundary data $\tilde g_{k}$ are functions in $C^\infty_c(\Rn \times (0, \infty))$, we can obtain that $w_k $ is a classical
function, e.g.  $w_k \in \dot C^{l, \frac{l }2}(\overline \R_+
\times [0, \infty))\cap \dot W_p^{l, \frac{l}{2}} (\R_+\times [0, \infty))$
for any $l \in {\mathbb N}$. Note that $w_k =0$ on $B_\frac12 \times (t_0,\frac14) $.

Suppose that the Caccioppoli's inequality holds for smooth solutions
of the Stokes equations, that is,  $w_k$ satisfies the following
inequality for $k \in {\mathbb N}$;
\begin{equation}\label{h-cacciopoli}
\int_{t_0}^{t_0 + \frac14 r^2} \int_{B^+_{\frac12 r}(0)}  | \na  w_k(x,t)|^2 dxdt
\leq c r^{-2} \int_{t_0}^{t_0 + r^2} \int_{B^+_r(0)} | w_k(x,t)|^2 dxdt,
\end{equation}
where $c$ is independent of solutions.
Due to  Proposition \ref{thm-stokes}, we note that $w_k$ converges
to $u$ in $L^4(0, \infty; L^2 (\R_+ ))$ and on the other hand, by
the above inequality \eqref{h-cacciopoli}, $\na w_k$ converges to
$\na w$  in $L^2 (B^+_{\frac12 r}(0)  \times (t_0, t_0+ r^2))$. Hence, we
also get
\begin{align*}
\int_{t_0}^{t_0 + \frac14 r^2} \int_{B^+_{\frac12 r}(0)}  | \na  w(x,t)|^2 dxdt
\leq c r^{-2} \int_{t_0}^{t_0 + r^2} \int_{B^+_r(0)} | w(x,t)|^2 dxdt \leq C.
\end{align*}
This is, however, contrary to Proposition \ref{lemma0406}. Therefore, the
Caccioppoli's inequality is not true for the Stokes equations near
boundary. We complete the proof of the Theorem \ref{maintheorem} for
the case of Stokes equations.

\subsection{Navier-Stokes equations}
Let $g$ be a boundary data
defined in \eqref{0502-6} and $w$ be a solution of the Stokes
equations \eqref{Stokes-bvp-200}-\eqref{Stokes-bvp-210} defined by
\eqref{rep-bvp-stokes-w}. By the result of  Proposition \ref{lemma0406}, for $\frac{n}{n-1} <p < \infty$, we have
\begin{align}
\label{uc1-1}  \| w\|_{L^4(0,\infty;L^p (\R_+))} \leq c\| g\|_{L^4(0,\infty;\dot{B}^{-\frac{1}{p}}_{pp}(\Rn))} \leq c\al, \quad
 \| \na w\|_{L^2(B_r \times (t_0, t_0 + r^2))} = \infty
\end{align}
for all $ r>0$, where $\al >0$ is defined in \eqref{0502-6}.

Next, we consider the following perturbed Navier-Stokes equations in
$\R_+\times (0,\infty)$:
\begin{equation}\label{CCK-Feb7-10}
v_t-\Delta v+\nabla q+{\rm div}\,\left(v\otimes v+v\otimes
w+w\otimes v\right)=-{\rm div}\,(w\otimes w), \quad {\rm
div} \, v =0
\end{equation}
with homogeneous initial and boundary data, i.e.
\begin{equation}\label{pnse-bdata-20}
v(x,0)=0,\qquad v(x,t)=0 \,\,\mbox{on} \,\,\{x_n=0\}.
\end{equation}
Our aim is to establish the existence of solution  $v$ for
\eqref{CCK-Feb7-10} satisfying $v \in L^4(0, \infty; L^{2n}(\R_+))$
and $\na v \in L^2(0, \infty; L^{n}(\R_+))$. In order to do that, we
consider the iterative scheme for \eqref{CCK-Feb7-10}, which is
given as follows: For a positive integer $m\ge 1$
\begin{align*}%\label{CCK-Feb7-20}
&v^{m+1}_t-\Delta v^{m+1}+\nabla q^{m+1}=-{\rm
div}\,\left(v^{m}\otimes v^{m}+v^{m}\otimes w+w\otimes
v^{m}+w\otimes w \right),\\
& \qquad \qquad \qquad \qquad \qquad {\rm div} \, v^{m+1} =0
\end{align*}
with homogeneous initial and boundary data, i.e. $v^{m+1}(x,0)=0$ and
$v^{m+1}(x,t)=0$ on $\{x_n=0\}$.
We set $v^1=0$. We then have, due to Proposition \ref{theo0503}, we have
\begin{align}\label{est-v2-10}
\notag \|\na  v^2\|_{L^2(0, \infty; L^{n}(\R_+))}  &\leq c \big(   \||w|^2\|_{L^2(0, \infty; L^{n}(\R_+)) } + \|w\otimes w|_{x_n =0}\|_{L^2(0, \infty; \dot B^{-\frac1n}_{nn} (\Rn)) } \big)\\
\notag & \leq c \big(  \|w\|^2_{L^4(0, \infty; L^{2n}(\R_+)) } + \|w\otimes w|_{x_n =0}\|_{L^2(0, \infty; L^{n-1} (\Rn)) }    \big)\\
& \leq c \big(  \|w\|^2_{L^4(0, \infty; L^{2n}(\R_+)) } + \|g\|^2_{L^4(0, \infty; L^{2(n-1)} (\Rn)) }    \big).
\end{align}
On the other hand, from Proposition \ref{thm-stokes},   we have
\begin{equation}\label{est-v2-20}
\| v^2\|_{L^4(0, \infty; L^{2n}(\R_+))}\leq c \|w \otimes
w\|_{L^2(0, \infty; L^{n}(\R_+))} \leq c   \|w\|^2_{L^4(0, \infty;
L^{2n}(\R_+)) }.
\end{equation}
By \eqref{uc1-1}, we have  $ A :=    \|w\|_{L^4(0, \infty; L^{2n}(\R_+)) } +  \|g\|_{L^4(0, \infty; L^{2(n-1)} (\Rn)) } \leq c\al$, where $\al>0$ is defined in \eqref{0502-6}. Taking $\al >0$ small such that
$A<\frac{1}{4c}$, where $c$ is the constant in
\eqref{est-v2-10}-\eqref{est-v2-20} such that
\begin{align*}
\|\na  v^2\|_{L^2(0, \infty; L^{n}(\R_+))}  +   \| v^2\|_{L^4(0, \infty; L^{2n}(\R_+))} < A.
\end{align*}
Then, iterative arguments show that
\begin{align}\label{0531-1}
\notag&\| \na v^{m+1}\|_{L^2(0, \infty; L^{n}(\R_+)) }\\
 \notag & \quad \leq c \big(
\||v^m|^2+|v^mw|+|w|^2\|_{L^2(0, \infty; L^{n}(\R_+))} + \|w\otimes w|_{x_n =0}\|_{L^2(0, \infty; \dot B^{-\frac1n}_{nn} (\Rn)) } \big) \\
\notag &\quad \leq  2c\left(\|v^m\|^2_{L^4(0, \infty; L^{2n}(\R_+))  }+\|w\|^2_{L^4(0, \infty; L^{2n}(\R_+))  } +  \|g\|^2_{L^4(0, \infty; L^{2(n-1)} (\Rn)) }\right)\\
 & \quad   \leq 4c A^2 <A.
\end{align}
Similarly, we note that
\begin{align}\label{0531}
\notag\| v^{m+1}\|_{L^4(0, \infty; L^{2n}(\R_+))} \leq c
\||v^m|^2+|v^mw|+|w|^2\|_{L^2(0, \infty; L^{n}(\R_+))}\\
\leq  2c\left(\|v^m\|^2_{L^4(0, \infty; L^{2n}(\R_+))  }+\|w\|^2_{L^4(0, \infty; L^{2n}(\R_+))  }\right)   \leq 4c A^2 <A.
\end{align}

We denote $V^{m+1}:=v^{m+1}-v^{m}$ and $Q^{m+1}:=q^{m+1}-q^{m}$ for $m\ge
1$. We then see that $(V^{m+1}, Q^{m+1})$ solves
\[
V^{m+1}_t-\Delta V^{m+1}+\nabla Q^{m+1}=-{\rm
div}\,\left(V^{m}\otimes v^{m}+v^{m-1}\otimes V^{m}+V^{m}\otimes
w+w\otimes V^{m}\right),
\]
\[
{\rm div} \, V^{m+1} =0,
\]
with homogeneous initial and boundary data, i.e. $V^{m+1}(x,0)=0$
and $V^{m+1}(x,t)=0$ on $\{x_n=0\}$. Taking sufficiently small
$\al>0$ such that $A < \frac1{6c}$, from \eqref{0531-1} and  \eqref{0531}, we obtain
\begin{align*}
\| \na V^{m+1}\|_{L^{2}(0, \infty; L^{n}(\R_+))}+\| V^{m+1}\|_{L^4 (0, \infty; L^{2n}(\R_+))}\\
\leq c \norm{|V^mv^m|+|V^mv^{m-1}|+|V^mw|}_{L^2(0, \infty; L^{n}(\R_+)) }\\
\leq 3c A \norm{V^m}_{L^4(0, \infty; L^{2n}(\R_+))}
<\frac12 \norm{V^m}_{L^4(0, \infty; L^{2n}(\R_+))}.
\end{align*}

Therefore, $(v^m, \na v^m)$ converges to $(v, \na v)$ in $L^4(0, \infty;
L^{2n}(\R_+))\times L^2(0, \infty; L^{n}(\R_+))$ such that $v$
solves in the sense of distributions
\[
v_t-\Delta v+\nabla \Pi=-{\rm div}\,\left(v\otimes v+v\otimes
w+v\otimes w+w\otimes w\right),
\]
\[
{\rm div} \, v =0,
\]
with homogeneous initial and boundary data, i.e. $v(x,0)=0$ and
$v(x,t)=0$ on $\{x_n=0\}$.

We then set $u:=v+w$ and $p =\pi + q$, which becomes a very weak solution of the
Navier-Stokes equations in $\R_+ \times (0, \infty)$, namely
\[
u_t-\Delta u+\nabla p=-{\rm div}\,\left(u\otimes u\right),\qquad
{\rm div} \, u=0,
\]
with boundary data $u(x,t)=g(x)$ on $\{x_n=0\}$ and homogeneous
initial data $u(x,0)=0$ such that
\begin{align}\label{0502-7}
 \| u \|_{L^4(0, \infty;  L^{2n}(\R_+))} \leq c,\qquad \| \na u\|_{L^2(B_r \times (t_0, t_0 + r^2))} = \infty \qquad \forall r >0.
\end{align}

Similarly, as in case of Stokes equations, we take  $  g_{n,k} \in
C^\infty_c (A \times (t_0, \frac14)) $  such that $g_{n,k}$ goes to
$g_n$ in $L^4(0,\infty; L^{2n}(\Rn ))$ as $k \ri \infty$. We denote
$\tilde{g}_k =(0,  g_{nk})$. Let $w_k$ be a solution of Stokes equations
with boundary data $\tilde{g}_k$ and $v_k$ be a solution of
\eqref{CCK-Feb7-10}-\eqref{pnse-bdata-20} with replacement of $w$ by
$w_k$. We recall that $w_k$ is smooth vector field such that $w_k $
converges to $w$ in $L^4 (0,\infty ; L^{2n}(\R_+ ))$.

We also observe that $v_k$ is smooth vector field such that
\[
\| \na v_k\|_{L^2(0, \infty;L^{n}(\R_+))}+\| v_k \|_{L^4(0,\infty;
L^{2n}(\R_+))} \leq c.
\]
By weak compactness, there is a subsequence of $\{ v_k\}$, redefined
as $v_k$, and $v\in L^4(0, \infty;L^{2n}(\R_+))$ such that $v_k$
weakly converges in $L^4(0, \infty;L^{2n}(\R_+))$ to $v$.   This
implies that $u_k = w_k + v_k$ weakly converges to $u = w + v$ in
$L^4(0,\infty; L^{2n}(\R_+))$.

Suppose that the Caccioppoli's inequality holds for smooth solutions
for the Navier-Stokes equations, that is, for $k \in {\mathbb N}$,
$u_k$ is assumed to satisfy the following inequality;
\begin{align*}
\int_{t_0}^{t_0 + \frac14 r^2} \int_{B^+_{\frac12 r}(0)}  | \na  u_k(x,t)|^2 dxdt
&\leq c r^{-2} \int_{t_0}^{t_0 + r^2} \int_{B^+_r(0)} | u_k(x,t)|^2 dxdt\\
&\qquad  + c r^{-1} \int_{t_0}^{t_0 + r^2} \int_{B^+_r(0)} | u_k(x,t)|^3 dxdt,
\end{align*}
where $c>0$ is independent of $k$. Since $\| u_k \|_{L^4(0,
\infty;L^{2n}(\R_+))} \leq c$ for all $k$, this leads to
\begin{align*}
\int_{t_0}^{t_0 + \frac14 r^2} \int_{B^+_{\frac12 r}(0)}  | \na  u(x,t)|^2 dxdt
\leq c <\infty.
\end{align*}
This is, however, contrary to \eqref{0502-7}, and therefore,
the Caccioppoli's inequality is not true for the Navier-Stokes
equations near boundary. This completes the proof of Theorem \ref{maintheorem}. \qed

\appendix
\setcounter{equation}{0}

\section{Proof of Proposition \ref{theo0503}}
\label{appendixa}

\subsection{Helmholtz projection  in half space} \label{projection}
\setcounter{equation}{0}

It is well known that the Helmholtz projection   ${\mathbb P}$ in half space $\R_+$ is given  by
\begin{align}\label{Hprojection}
{\mathbb P} f = f - \na {\mathbb Q}_1f - \na {\mathbb Q}_2 f = f -\na {\mathbb Q}f,
\end{align}
where ${\mathbb Q}_1 f$ and ${\mathbb Q}_2f $ satisfy the following equations;
\begin{align*}
\De {\mathbb Q}_1 f ={\rm div}\, f,\qquad
{\mathbb Q}_1 f|_{x_n =0} =0
\end{align*}
and
\begin{align*}
\De{\mathbb Q}_2f =0,\qquad
D_{x_n} {\mathbb Q}_2f|_{x_n =0} = \big(f_n -D_{x_n}{\mathbb Q}_1f\big)|_{x_n =0}.
\end{align*}
Note that ${\mathbb Q}_1f$ and ${\mathbb Q}_2f$ are represented by
\begin{align}\label{0427-1}
{\mathbb Q}_1f(x)& =- \int_{\R_+}  \na_y \big(N(x- y) - N (x - y^* ) \big) \cdot  f(y) dy,\\
\label{0427-2} {\mathbb Q}_2f (x) & = \int_{\Rn}   N(x'-y',x_n)  \big(f_n(y', 0)  - D_{y_n} {\mathbb Q}_1f (y', 0) \big) dy',
\end{align}
where $y^*= (y', -y_n)$.
Note that $({\mathbb P}f)_n|_{x_n =0} =0$.

\begin{lemm}\label{lemma0523-3}

Let  $f_i = {\rm div} \, F_i $ and   $f = ( f_1, \cdots, f_n)$.   Then,
\begin{align*}
{\mathbb Q}_1  f(x) &=\sum_{ k \neq n} D_{x_k} {\mathbb Q}_1 F_k(x) +  D_{x_n} A(x),\\
{\mathbb Q}_2 f(x) &= \sum_{  k \neq  n} D_{x_k}  {\mathbb Q}_2 F_k(x) - \sum_{  k \neq  n} D_{x_k} B^1_k(x)  - \sum_{ k \neq n} D^2_{x_k} B^2(x),\\
&\qquad ,
\end{align*}
where
\begin{align*}
A(x) & = -\int_{\R_+}\na_y \big( N(x-y) + N(x-y^*) \big)\cdot F_n(y) dy +2 B^1_n(x),\\
B^1_k(x) & =  -  \int_{\Rn} N(x' -y', x_n) F_{kn} (y', 0) dy', \qquad 1 \leq k \leq n,\\
B^2(x) & =  \int_{\Rn} N(x'-y',x_n) A(y',0) dy'.
\end{align*}

\end{lemm}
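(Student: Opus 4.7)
The plan is to prove both identities from the explicit kernel representations \eqref{0427-1} and \eqref{0427-2}, substituting $f_i = {\rm div}\, F_i$ and integrating by parts in $y$. The key ingredients will be the symmetry identities $D_{y_k}\Phi = -D_{x_k}\Phi$ and $D_{y_k}\Psi = -D_{x_k}\Psi$ for tangential $k \neq n$, together with the cross identities $D_{y_n}\Phi = -D_{x_n}\Psi$ and $D_{y_n}\Psi = -D_{x_n}\Phi$, where I write $\Phi(x,y) = N(x-y) - N(x-y^*)$ and $\Psi(x,y) = N(x-y) + N(x-y^*)$. These encode the opposite parity of $\Phi$ and $\Psi$ under the reflection $y \mapsto y^*$.

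For the first identity, I would start from
\[
\mathbb{Q}_1 f(x) = -\sum_{i,k}\int_{\R_+} D_{y_i}\Phi(x,y)\, \partial_{y_k} F_{ik}(y)\, dy
\]
and integrate by parts in $y_k$. For $k \neq n$ no boundary term arises, and $D_{y_k}\Phi = -D_{x_k}\Phi$ lets me pull $D_{x_k}$ outside the integral to recognize $\sum_{k \neq n} D_{x_k}\mathbb{Q}_1 F_k$. For $k = n$ the boundary trace at $\{y_n=0\}$ survives; since $D_{y_i}\Phi|_{y_n=0}=0$ for $i \neq n$ and $D_{y_n}\Phi|_{y_n=0} = -2\partial_n N(x'-y', x_n)$, this contribution equals $2 D_{x_n} B^1_n$. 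The remaining interior piece from $k = n$, rewritten via $D_{y_n}\Phi = -D_{x_n}\Psi$, becomes $-D_{x_n}\int\nabla_y\Psi \cdot F_n\, dy$; together with $2 D_{x_n} B^1_n$ this reassembles precisely into $D_{x_n} A$ by the definition of $A$.

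For the second identity, I would exploit that $\mathbb{Q}_2 f$ is (up to normalization) the single-layer harmonic extension of the Neumann datum $(f_n - D_{y_n}\mathbb{Q}_1 f)|_{y_n=0}$. After substituting $f_n = {\rm div}\, F_n$ and separating tangential from normal divergence, tangential IBP in $y_k$ on the boundary converts the tangential divergence into $-\sum_{k\neq n} D_{x_k} B^1_k$. Plugging in the Part 1 formula yields
\[
D_{y_n}\mathbb{Q}_1 f(y',0) = \sum_{k\neq n} D_{y_k}\bigl(D_{x_n}\mathbb{Q}_1 F_k|_{x_n=0}\bigr) + D_{x_n}^2 A\big|_{x_n=0},
\]
and one more tangential IBP assembles the first sum, together with the normal piece of $f_n$, into $\sum_{k\neq n} D_{x_k}\mathbb{Q}_2 F_k$ via the defining relation of each $\mathbb{Q}_2 F_k$. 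For the final contribution I would use $\Delta A = f_n$ to replace $D_{x_n}^2 A|_{x_n=0}$ by $(f_n - \Delta_{y'} A)|_{y_n=0}$; the $f_n$ portion cancels a leftover boundary term, while $\Delta_{y'} A|_{y_n=0}$, integrated against $N$ and IBP'd twice in $y_k$, produces the missing $-\sum_{k\neq n} D_{x_k}^2 B^2$ term.

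The main obstacle will be the careful treatment of boundary-limit subtleties on $\{y_n=0\}$: several quantities such as $D_{x_n}\Psi|_{x_n=0}$ or $D_{y_n}^2\Phi|_{x_n=0}$ vanish pointwise yet contribute nonzero boundary traces through singular-integral concentrations analogous to the half-space Poisson kernel limit. These hidden concentrations are precisely what make the claimed identities compatible with the Dirichlet trace $\mathbb{Q}_1 f|_{x_n=0}=0$ and with the Neumann trace of $\mathbb{Q}_2 f$, and accounting for each of them consistently will be the heart of the calculation.
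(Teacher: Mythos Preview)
Your strategy is the same as the paper's: parity identities for $\Phi,\Psi$ plus integration by parts, and for $\mathbb{Q}_2$ the relation $\Delta A=f_n$ to convert $D_{x_n}^2 A$ into $f_n-\Delta'A$. There is, however, an indexing slip in your outline of Part~1. After integrating by parts in the \emph{divergence} index $y_k$ and then converting $D_{y_k}\to -D_{x_k}$, the piece you isolate for fixed $k\neq n$ is $-D_{x_k}\sum_i\int D_{y_i}\Phi\,F_{ik}\,dy$, which is $D_{x_k}$ of $\mathbb{Q}_1$ applied to the \emph{column} $(F_{1k},\ldots,F_{nk})$, not to the row $F_k=(F_{k1},\ldots,F_{kn})$; the same transpose appears in your $k=n$ interior term and in your boundary treatment of $f_n$ for $\mathbb{Q}_2$ (which produces $F_{nk}$, while $B^1_k$ is built from $F_{kn}$). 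The total sums do agree---by commutativity of mixed partials each $(i,k)$ contribution reduces to $D_{x_i}D_{x_k}\int\Phi\,F_{ik}$ or its $\Psi$ analogue---but your step ``recognize $\sum_{k\neq n}D_{x_k}\mathbb{Q}_1 F_k$'' fails as written. The clean route, and what the paper does, is to convert the already-present $D_{y_i}$ (the \emph{component} index of $f$) first: for $i\neq n$ write $-\int D_{y_i}\Phi\,f_i=D_{x_i}\int\Phi\,f_i$, for $i=n$ write $D_{x_n}\int\Psi\,f_n$, and only then integrate by parts in ${\rm div}\,F_i$. With that reordering your outline matches the paper line by line.

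Your anticipated ``main obstacle'' about singular concentrations at $x_n=0$ does not arise. The computation is carried out for fixed $x$ with $x_n>0$, where $\Phi,\Psi$ and their $y$-derivatives are smooth on $\overline{\R_+}\setminus\{x\}$; the only boundary contributions are the ordinary integration-by-parts terms on $\{y_n=0\}$, which are precisely the $B^1$ pieces. No Poisson-kernel-type limits are needed, and the calculation is purely mechanical.
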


\begin{proof}

From \eqref{0427-1}, we have
\begin{align}\label{0424-1}
\notag {\mathbb Q}_1  f(x)
\notag &= - \sum_{ k  \neq n} D_{x_k}\int_{\R_+} \na_y \big( N(x-y) - N(x-y^*) \big) \cdot F_k(y) dy\\
\notag & \quad  -  D_{x_n} \int_{\R_+}\na_y \big( N(x-y) + N(x-y^*) \big)\cdot F_n(y) dy\\
\notag &\quad +2  D_{x_n} \int_{\Rn} N(x'-y', x_n) F_{nn} (y', 0) dy'\\
&:=\sum_{ k \neq n} D_{x_k} {\mathbb Q}_1 F_k(x) +  D_{x_n} A(x).
\end{align}

Since $\De A = {\rm div} \, F_n = f_n$, from \eqref{0424-1}, we have  $D_{y_n}{\mathbb Q}_1f (y) = D_{y_n} \sum_{k \neq n} D_{y_k} {\mathbb Q}_1 F_k(y)  +  f_n (y) - \De' A(y) $. Hence, we have
\begin{align}\label{0501-1}
\notag \int_{\Rn} N(x' -y', x_n) D_{y_n} {\mathbb Q}_1f (y',0) dy'
\notag & = \sum_{  k \neq  n}  D_{x_k} \int_{\Rn} N(x'-y',x_n)   D_{y_n} {\mathbb Q}_1 F_k(y',0)  dy'\\
\notag &\quad  + \int_{\Rn} N(x'-y',x_n)f_n(y',0) dy'\\
&\quad  + \sum_{ k \neq n} D_{x_k} \int_{\Rn} N(x'-y',x_n) D_{y_k}A(y',0) dy'.
\end{align}

Hence, from \eqref{0427-2} and  \eqref{0501-1}, we have
\begin{align*} %\label{Q-2-1}
{\mathbb Q}_2 f(x) &=
-\sum_{  k \neq  n}  D_{x_k} \int_{\Rn} N(x'-y',x_n)  D_{y_n} {\mathbb Q}_1 F_k(y',0)  dy'\\
&\quad  - \sum_{ k \neq n} D_{x_k} \int_{\Rn} N(x'-y',x_n) D_{y_k}A(y',0) dy'.
\end{align*}
We complete the proof.
\end{proof}

\subsection{Proof of Proposition \ref{theo0503}}
To prove Proposition \ref{theo0503}, we use the following Proposition.
\begin{prop}(\cite[Proposition 3.2]{CJ})
\label{0503prop2}
 Let $1<p,q<\infty$. Let $\Ga *' g(x,t) =  \int_{-\infty}^t \int_{\Rn} \Ga(x' -y', x_n, t-\tau)g(y',\tau) dy' d\tau $.
Then
\begin{align*}
\| D_x \Ga *' g\|_{L^q({\mathbb R};L^p(\R_+))}  \leq c \|  g\|_{L^q({\mathbb R};\dot{B}^{-\frac{1}{p}}_{pp}(\Rn))}.
\end{align*}
\end{prop}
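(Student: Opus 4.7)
The plan is to view $\Ga*' g$ as a parabolic heat extension of $g$ from $\{x_n=0\}$ into $\R_+$, and to match the $L^p$-integral of its gradient in the normal direction with the heat-semigroup characterization of the Besov norm $\dot B^{-\frac1p}_{pp}(\Rn)$.

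First, using the tensor factorization $\Ga(x',x_n,t)=\Ga^1(x_n,t)\Ga^{n-1}(x',t)$ of the Euclidean heat kernel (with $\Ga^k$ the $k$-dimensional heat kernel), I would rewrite
\begin{equation*}
\Ga*' g(x',x_n,t) = \int_0^\infty \Ga^1(x_n,s)\,[e^{s\De'}g(\cdot,t-s)](x')\,ds,
\end{equation*}
where $e^{s\De'}$ is the tangential heat semigroup. Differentiating in $x_n$ produces the kernel $-\tfrac{x_n}{2s}\Ga^1(x_n,s)$, whose $L^p(\R_+)$-norm in $x_n$ is $Cs^{-1+\frac{1}{2p}}$, while differentiating in the tangential variable $x_j$ ($j<n$) pulls $D_{x_j}$ inside the semigroup, and $\|\Ga^1(\cdot,s)\|_{L^p(\R_+)}=Cs^{-\frac12+\frac{1}{2p}}$. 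In both cases the resulting power of $s$ is precisely the one needed to match the Besov index $-1/p$.

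Second, I would invoke the heat-semigroup characterization of the homogeneous Besov space (see e.g. \cite{BL}),
\begin{equation*}
\|f\|_{\dot B^{-\frac1p}_{pp}(\Rn)}^p \;\sim\; \int_0^\infty s^{-\frac12}\|e^{s\De'}f\|_{L^p(\Rn)}^p\,ds,
\end{equation*}
and combine it with the kernel estimates of the first step via Fubini to obtain the diagonal case $q=p$ directly: the $L^p$-integration in $x_n$ converts exactly into the $s^{-\frac12}$ weight on the Besov side.

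Third, for general $1<p,q<\infty$, I would pass to the Fourier side in $(x',t)$. A direct computation gives the multiplier of $\Ga*' g$ as $(2z)^{-1}e^{-x_n z}$ with $z=\sqrt{|\xi'|^2+i\eta}$, so that $D_x\Ga*' g$ has symbol $(-\tfrac12,\,\tfrac{i\xi'}{2z})\,e^{-x_n z}$, a one-parameter family of parabolic Calder\'on--Zygmund type indexed by $x_n>0$. Applying the operator-valued Mikhlin multiplier theorem on $L^q({\mathbb R};L^p(\Rn))$ via $R$-boundedness, and then integrating in $x_n$ against the Gaussian weight arising from the symbol, recovers the Besov norm of $g$ on the right. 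The main obstacle I expect is precisely this off-diagonal case $p\neq q$: verifying $R$-boundedness of the symbol family uniformly in $x_n$ and aligning the $x_n$-integration in the mixed-norm space $L^q({\mathbb R};L^p(\R_+))$ with the Besov weight $s^{-\frac12}$ produced by the first step. An alternative would be to recast $\Ga*' g$ as a parabolic singular integral on $\Rn\times{\mathbb R}$ with values in an $x_n$-weighted $L^p$ space and invoke vector-valued Calder\'on--Zygmund theory.
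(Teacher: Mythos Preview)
The paper does not supply its own proof of this proposition: it is quoted directly from \cite[Proposition~3.2]{CJ} and used as a black box in the proof of Proposition~\ref{theo0503}. There is therefore no argument in the present paper to compare your proposal against.

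On the proposal itself: the overall strategy is the natural one, and the factorization $\Ga(x',x_n,t)=\Ga^1(x_n,t)\Ga^{n-1}(x',t)$ together with the heat-semigroup description of $\dot B^{-1/p}_{pp}$ is exactly the right framework. But your second step is too optimistic even in the diagonal case $q=p$. The quantity
\[
D_{x_n}\Ga*'g(x',x_n,t)=\int_0^\infty \partial_{x_n}\Ga^1(x_n,s)\,\bigl[e^{s\De'}g(\cdot,t-s)\bigr](x')\,ds
\]
is an \emph{integral} in $s$, not a single evaluation, so taking $\|\cdot\|_{L^p(dx_n)}$ does not by Fubini alone collapse to the weight $s^{-1/2}$ of the Besov characterization. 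One needs an additional argument---a Hardy-type inequality in $s$, or a dyadic decomposition $s\sim 2^j$ with almost-orthogonality---to pass from the $s$-integral to the Besov square function; and the time shift $t\mapsto t-s$ has to be absorbed simultaneously (translation invariance plus Minkowski in $t$ is the obvious move, but it must be woven into the $s$-decomposition). For the tangential derivative the same issue arises, since $D_{x_j}e^{s\De'}$ costs an extra $s^{-1/2}$ only in operator norm, and one again has an $s$-integral to control. Your third step for $p\neq q$ is, as you say, only a sketch; the $R$-boundedness route is plausible but would require real work to make uniform in $x_n$ and to reconcile the $x_n$-integration with the mixed norm.
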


We consider the Stokes equations
\begin{align}\label{maineq-stokesh=0}
\begin{array}{l}\vspace{2mm}
v_t - \De v + \na \Pi =f, \qquad \mbox{div } v =0, \mbox{ in }
 \R_+\times (0,\infty),\\
\hspace{30mm}v|_{t=0}= 0, \qquad  v|_{x_n =0} = 0,
\end{array}
\end{align}
where $f = \mbox{div}\, F$.

Let  $f = {\mathbb P}f + \na {\mathbb Q} f$ be  a  decomposition of $f$ defined \eqref{Hprojection}. Note that $({\mathbb P} f)_n|_{x_n =0} =0$.  We define  $(v, \Pi_0)$  by
\begin{equation}\label{expression-v}
v_i (x,t) =\int_0^t \int_{{\mathbb R}^n_+} G_{ij}(x,y, t-\tau)
({\mathbb P} f)_j(y,\tau) dyd\tau,
\end{equation}
\begin{equation*}%\label{expression-p}
\Pi_0(x,t) =\int_0^t \int_{{\mathbb R}^n_+} P(x,y, t-\tau)
\cdot  ({\mathbb P} f) (y,\tau)dyd\tau,
\end{equation*}
where $G$ and $P$ are defined by
\begin{align}\label{formulas-v}
G_{ij} &= \de_{ij} (\Ga(x-y, t) - \Ga(x-y^*,t )) + 4(1 -\de_{jn})
\frac{\pa}{\pa x_j} \int_0^{x_n} \int_{{\mathbb R}^{n-1}}
            \frac{\pa N(x-z)}{\pa x_i} \Ga(z -y^* , t) dz,
\end{align}
\begin{align*}%\label{formulas-p}
\notag P_j(x,y,t) & =4 (1 - \de_{jn}) \frac{\pa }{\pa x_j}\Big[ \int_{{\mathbb
R}^{n-1}} \frac{\pa N(x' - z', x_n)}{\pa x_n} \Ga(z' -y', y_n,t) dz'\\
& \quad +\int_{{\mathbb R}^{n-1}} N(x' -z',x_n) \frac{\pa \Ga(z'-y', y_n,
t)}{\pa y_n} dz'\Big].
\end{align*}
From \cite{So}, $(v, \Pi_0)$ satisfies
\begin{align*}
\begin{array}{l}\vspace{2mm}
v_t - \De v + \na \Pi_0 = {\mathbb P}\, f, \qquad \mbox{div } v =0, \mbox{ in }
 \R_+\times (0,\infty),\\
\hspace{30mm}v|_{t=0}= 0, \qquad  v|_{x_n =0} = 0.
\end{array}
\end{align*}
Let $\Pi = \Pi_0 + {\mathbb Q}f$. Then, $(v, \Pi)$ is solution of \eqref{maineq-stokesh=0}.

Let $1<p, \, q <\infty$.  In Section 3 in  \cite{CK}, the authors showed that
 $v $ defined by
\eqref{expression-v}   has the following estimate;
\begin{align}\label{0417-2}
\|  \na v \|_{L^q(0, \infty;  L^{p}(\R_+)) } &
   \leq c \big( \| \na   \Gamma* {\mathbb P} f\|_{ L^q(0, \infty;  L^{p}(\R_+)) } +
  \|  \na \Gamma^* * {\mathbb P} f \|_{ L^q(0, \infty; L^{p}(\R_+)} \big),
\end{align}
where $\Gamma^* *  f (x,t) = \int_0^t \int_{\R_+} \Ga(x-y^*, t-\tau) f(y, \tau) dyd\tau$.

\begin{lemm}\label{0929-1}
Let $1 <  p,\, q < \infty$.
Let  $F \in L^{q} (0, \infty, L^p(\R_+))$. Then,
\begin{align*}%\label{0418-1}
\notag \| \na  \Ga* {\mathbb P}({\rm div} \, F)\|_{L^q (0, \infty; L^{p}(\R_+)} + \| \na \Ga^* * {\mathbb P}({\rm div} \, F)\|_{L^q(0, \infty;  L^{p}(\R_+))}\\
 \leq c \big(\| F\|_{L^{q} (0,\infty;  L^p (\R_+))} +  \| F|_{x_n =0}\|_{L^q (0, \infty; \dot B^{-\frac1p}_{pp}(\Rn))}\big).
\end{align*}

\end{lemm}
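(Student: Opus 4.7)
The plan is to invoke the Helmholtz splitting \eqref{Hprojection}, namely
\[
\mathbb{P}(\mbox{div}\, F) \;=\; \mbox{div}\, F \;-\; \nabla \mathbb{Q}_1(\mbox{div}\, F) \;-\; \nabla \mathbb{Q}_2(\mbox{div}\, F),
\]
and to bound $\nabla \Gamma \ast$ (and $\nabla \Gamma^{\ast}\ast$) applied to each summand separately. The argument for $\Gamma^{\ast}$ is obtained from the one for $\Gamma$ by the reflection $y\mapsto y^{\ast}$, which does not affect any singularity or Besov estimate, so I only spell out the $\Gamma$ case.

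For the first summand I integrate by parts in $y$. Since integration by parts in $y_n$ over $\R_+$ produces a boundary contribution at $\{y_n=0\}$, while the tangential directions produce none, one obtains
\[
\nabla_x \Gamma \ast (\mbox{div}\, F) \;=\; \nabla_x^2 \Gamma \ast F \;-\; \nabla_x \Gamma(\cdot,0,\cdot) \ast' F_n|_{y_n=0}.
\]
The bulk term $\nabla_x^2 \Gamma \ast F$ is controlled in $L^q(0,\infty;L^p(\R_+))$ by $c\|F\|_{L^qL^p}$ via classical parabolic Calder\'on--Zygmund (heat maximal regularity, applied after extending $F$ by zero to $\R$). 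The boundary term is exactly of the form treated by Proposition \ref{0503prop2} and is bounded by $c\|F_n|_{y_n=0}\|_{L^q(0,\infty;\dot B^{-1/p}_{pp}(\Rn))}$.

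For the remaining two summands I use Lemma \ref{lemma0523-3} to expand $\mathbb{Q}_1(\mbox{div}\,F) = \sum_{k\neq n} D_{x_k}\mathbb{Q}_1 F_k + D_{x_n} A$ and the analogous expression for $\mathbb{Q}_2(\mbox{div}\,F)$, and split the resulting contributions into \emph{tangential-derivative pieces} (those carrying $D_{x_k}$ with $k\neq n$) and the single \emph{normal-derivative piece} $D_{x_n}A$. Tangential derivatives commute with the $y_n$-integration, so I transfer each $D_{x_k}$ onto $\Gamma$ by integrating by parts in $y_k$ without any boundary contribution, obtaining operators of the form $\nabla D_{x_k}\Gamma\ast\nabla\mathbb{Q}_iF_k$, $\nabla D_{x_k}\Gamma\ast\nabla B^1_k$, and $\nabla D_{x_k}^2\Gamma\ast\nabla B^2$. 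Each is a composition of a second-order heat operator --- bounded on $L^qL^p$ by parabolic maximal regularity --- with either an $L^p$-bounded singular integral ($\nabla\mathbb{Q}_i$) or with a harmonic extension of boundary data; the latter pieces reduce, via the Poisson representation and one further tangential integration by parts, to expressions $\nabla\Gamma\ast' g$ that are controlled by Proposition \ref{0503prop2}. The normal-derivative piece $\nabla\Gamma\ast D_{x_n}\nabla A$ is integrated by parts in $y_n$, producing a bulk term (again of the form $\nabla_x^2\Gamma\ast\nabla A$, controlled by $\|\nabla A\|_{L^qL^p}\le c\|F\|_{L^qL^p}$ since $\nabla A$ consists of an $L^p$-bounded Riesz-type operator on $F_n$ plus a derivative of a harmonic extension of $F_{nn}|_{y_n=0}$) and a boundary term $\nabla_x\Gamma(\cdot,0,\cdot)\ast'(\nabla A)|_{y_n=0}$ to which Proposition \ref{0503prop2} applies.

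The main technical obstacle is expected to be this last step: one must show that the boundary traces that arise from the normal integration by parts --- most notably $(\nabla A)|_{y_n=0}$ and $(\nabla^2 B^2)|_{y_n=0}$ --- belong to $L^q(0,\infty;\dot B^{-1/p}_{pp}(\Rn))$ with norm dominated by $\|F\|_{L^qL^p}+\|F|_{y_n=0}\|_{L^q\dot B^{-1/p}_{pp}}$. This requires combining the trace characterization of $\dot B^{-1/p}_{pp}(\Rn)$ as the natural trace space for $L^p(\R_+)$, the $L^p$-boundedness of the Riesz transforms and harmonic-extension operators that define $A$ and $B^2$, and careful bookkeeping of the singular kernels near $\{y_n=0\}$. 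Once these Besov-norm bounds for the traces are in place, assembling the pieces yields the estimate claimed in Lemma \ref{0929-1}.
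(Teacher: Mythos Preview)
Your framework coincides with the paper's: Helmholtz splitting, the structural Lemma~\ref{lemma0523-3}, integration by parts, parabolic Calder\'on--Zygmund for the bulk pieces, and Proposition~\ref{0503prop2} for the boundary contributions. The place where your argument and the paper's diverge is precisely the ``main technical obstacle'' you single out, and the paper handles it by a device you do not mention.

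For the $n$-th component the dangerous term is $\Gamma\ast D_{y_n}^2 A$. You propose to integrate by parts in $y_n$, which forces you to control the trace $(D_{y_n}A)\big|_{y_n=0}$ in $\dot B^{-1/p}_{pp}(\Rn)$. That trace is not given by the hypotheses, and justifying it would require extra analysis of the single-layer and Newton-type potentials defining $A$; you leave this unresolved. The paper sidesteps the issue entirely by observing that $\Delta A = {\rm div}\,F_n$, hence
\[
D_{x_n}^2 A \;=\; -\Delta' A \;+\; {\rm div}\,F_n .
\]
Substituting this identity before convolving with $\Gamma$ replaces the double normal derivative by a sum of tangential second derivatives of $A$ (whose tangential integrations by parts produce no boundary terms) plus ${\rm div}\,F_n$ (whose integration by parts yields only the trace $F_{nn}\big|_{y_n=0}$, which \emph{is} in the hypothesis). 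After this manipulation every term carries at least one tangential derivative or is directly of the form treated in your first summand, so the only boundary traces that ever appear are components of $F\big|_{x_n=0}$ itself. Your obstacle simply disappears.

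A second point you pass over too quickly: you assert that $\nabla\mathbb{Q}_2$ is an $L^p$-bounded singular integral. This is true but not immediate, since $\mathbb{Q}_2 F_k$ is defined through a Neumann problem whose data is $(F_k-\nabla\mathbb{Q}_1F_k)_n\big|_{x_n=0}$. The paper justifies this step by invoking the normal-trace theorem for divergence-free $L^p$ vector fields (Galdi): because $F_k-\nabla\mathbb{Q}_1F_k$ is divergence-free in $L^p(\R_+)$, its normal trace lies in $\dot B^{-1/p}_{pp}(\Rn)$ with norm $\lesssim\|F_k\|_{L^p}$, and then the Poisson estimate~\eqref{Poisson} gives $\|\nabla\mathbb{Q}_2 F_k\|_{L^p}\lesssim\|F_k\|_{L^p}$.
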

\begin{proof}
Since the proofs are exactly same, we only prove the case of $ \Ga^* * {\mathbb P}({\rm div} \, F)$.

Since $({\mathbb P}\, {\rm div}\, F)_j (t) = {\rm div}\, F_j(t) - D_{x_j} {\mathbb Q} \, {\rm div}\, F(t)$,   for $1 \leq j \leq n-1$, we have
\begin{align*}%\label{0421-1}
\notag (\Ga^* * ({\mathbb P}\, {\rm div}\, F) )_j (x,t) &= \int_0^t \int_{{\mathbb R}^n_+}
 \na_y \Ga(x-y^*, t-\tau) \cdot F_{j}(y,\tau) dyd\tau \\
\notag &\quad  + \int_0^t \int_{\Rn}
   \Ga(x'-y', x_n, t-\tau) F_{jn}(y',0,\tau) dy' d\tau \\
&\quad +  \int_0^t \int_{{\mathbb R}^n_+}
D_{y_j} \Ga(x-y^*, t-\tau) {\mathbb Q}({\rm div} \, F )(y,\tau) dyd\tau.
\end{align*}
Note that $D_{x_n}^2 A(t) = -\De' A (t)+ {\rm div }\, F_n(t)$. From Lemma \ref{lemma0523-3}, we have
\begin{align*}%\label{0421-2}
\notag \Ga^* * ({\mathbb P}\, {\rm div}\, F)_n (x,t)
&= \int_0^t \int_{{\mathbb R}^n_+}  \na_y \Ga(x-y^*, t-\tau) \cdot F_n(y,\tau) dyd\tau dz'\\
\notag &\quad  + \int_0^t \int_{\Rn}
   \Ga(x'-y', x_n, t-\tau)  F_{nn}(y',0,\tau) dy' d\tau \\
\notag &\quad + \sum_{1 \leq k \leq n-1} D_{x_k}  \int_0^t \int_{{\mathbb R}^n_+}  \Ga(x-y^*, t-\tau)D_{y_n} {\mathbb Q}  F_k(y,\tau) dyd\tau\\
%& +  \int_0^t \int_{{\mathbb R}^n_+}  \na_y \Ga(x-y^*, t-\tau) \cdot F_n(y,\tau) dyd\tau dz'\\
\notag &\quad +  \int_0^t \int_{{\mathbb R}^n_+} \na' \Ga(x-y^*, t-\tau)\cdot \na'A(y,\tau) dyd\tau\\
%&\quad +  \int_0^t \int_{{\mathbb R}^n_+} \na' \Ga(x-y^*, t-\tau)\cdot \na' D_{y_n}B_1(y,\tau) dyd\tau\\
\notag &\quad +  \int_0^t \int_{{\mathbb R}^n_+} \na' \Ga(x-y^*, t-\tau)\cdot \na' D_{y_n}B^1_k(y,\tau) dyd\tau\\
&\quad +  \int_0^t \int_{{\mathbb R}^n_+} \na' \Ga(x-y^*, t-\tau)\cdot \na' D_{y_n}B^2(y,\tau) dyd\tau.
\end{align*}

Due to parabolic type's Calderon-Zygmun Theorem and Proposition \ref{0503prop2},  for  $ 1 < p, q< \infty$, we have
\begin{align*}
\|\na \Ga^* * ({\mathbb P}\, {\rm div}\, F)\|_{L^q(0, \infty; L^p(\R_+)) }
& \leq   \big(
    \| D_x {\mathbb Q} F  \|_{ L^q(0,   \infty; L^p(\R_+))} + \|D_x A\|_{ L^q(0,   \infty; L^p(\R_+))}\\
     & \quad  +  \|D_x B^1_k F_k \|_{ L^q(0,   \infty; L^p(\R_+))} +   \|D^2_x B^2 \|_{ L^q(0,   \infty; L^p(\R_+))}\\
     & \quad + \| F\|_{L^q(0,   \infty; L^p(\R_+)) }    + \| F|_{x_n =0}\|_{L^q (0, \infty; \dot B^{-\frac1p}_{pp}(\Rn))}   \big).
\end{align*}
It is well known (see e.g. \cite{St})  that  $D B^1_k(t)$ are bounded from $\dot{B}^{l-\frac{1}{p}}_{pp}(\Rn)$ to $\dot W^l_p(\R_+)$, $l \in {\mathbb N} \cup \{ 0\}$ so that
\begin{align}\label{Poisson}
  \| D_x B^1_k(t)\|_{\dot W^l_p(\R_+)}\leq c\|F(t)|_{x_n =0}\|_{\dot B^{l-\frac{1}{p}}_{pp}(\Rn)}.
 \end{align}

By Calderon-Gygmund Theorem and \eqref{Poisson}, we have
\begin{align*}
\| D_x {\mathbb Q}_1 F(t)\|_{L^p (\R_+)} & \leq c \|  F(t)\|_{ L^p (\R_+)},\\
 \| D_x A(t) \|_{ L^p(\R_+)}  & \leq c \|  F(t)\|_{ L^p(\R_+)} + c\|F(t)|_{x_n =0}\|_{\dot B^{k-\frac{1}{p}}_{pp}(\Rn)},\\
\|D^2_x B^2(t) \|_{L^p (\R_+)} & \leq  c \| A(t)|_{x_n =0} \|_{\dot B^{1  -\frac1p}_{pp} (\Rn)} \leq  c \| D_xA(t) \|_{L^p (\R_+)}\\
 &\leq c \| F(t)\|_{L^p (\R_+)} + c\|F(t)|_{x_n =0}\|_{\dot B^{-\frac{1}{p}}_{pp}(\Rn)}.
\end{align*}
Finally, from \eqref{Poisson}, we have
  \begin{align*}
 \| D_x  {\mathbb Q}_2 F_k (t)\|_{L^p (\R_+)} & \leq c  \|\big( F_k (t)- \na {\mathbb Q}_1F_k(t) \big)_n|_{x_n =0} \|_{\dot B^{-\frac1p}_{pp} (\Rn)}.
  \end{align*}
Since $F_k(t) - \na {\mathbb Q}_1F_k(t)$ is in $ \in L^p (\R_+)$ and  divergence free in $\R_+$, its normal component  has trace  $ \big( F_k(t) - \na {\mathbb Q}_1F_k (t) \big)_n|_{x_n =0} \in \dot B^{ -\frac1p}_{pp} (\Rn)$  (see \cite{galdi}). Hence,  we have
 \begin{align*}
\|\big( F_k(t) - \na {\mathbb Q}_1F_k (t) \big)_n|_{x_n =0} \|_{\dot B^{ -\frac1p}_{pp} (\Rn)} \leq  \|F_k(t) - \na {\mathbb Q}_1F_k (t) \|_{L^p (\R_+)}\leq c \|F_k(t)   \|_{ L^p (\R_+)}.
 \end{align*}
Therefore, we complete the proof of Lemma \ref{0929-1}.
\end{proof}
With the aid of the estimate \eqref{0417-2} and Lemma \ref{0929-1}, Proposition  \ref{theo0503} is immediate. %$\Box$

%%%%%%%%%%%%%%%%%%%%%%%%%%%%%%%%%%%%%%%%%%%%%%%%%%%%%%%%%%%%
%%%%%%%%%%%%%%%%%%%%%%%%%%%%%%%%%%%%%%%%%%%%%%%%%%%%%%%%%%%%
%%%%%%%%%%%%%%%%%%%%%%%%%%%%%%%%%%%%%%%%%%%%%%%%%%%%%%%%%%%%

\section*{Acknowledgements}
T. Chang is partially supported by NRF-2017R1D1A1B03033427 and K. Kang is partially
supported by NRF-2017R1A2B4006484 and NRF-2015R1A5A1009350.

\begin{equation*}
\left.
\begin{array}{cc}
{\mbox{Tongkeun Chang}}\qquad&\qquad {\mbox{Kyungkeun Kang}}\\
{\mbox{Department of Mathematics }}\qquad&\qquad
 {\mbox{Department of Mathematics}} \\
{\mbox{Yonsei University
}}\qquad&\qquad{\mbox{Yonsei University}}\\
{\mbox{Seoul, Republic of Korea}}\qquad&\qquad{\mbox{Seoul, Republic of Korea}}\\
{\mbox{chang7357@yonsei.ac.kr }}\qquad&\qquad
{\mbox{kkang@yonsei.ac.kr }}
\end{array}\right.
\end{equation*}

\end{document}